\documentclass{amsart}
\usepackage{hyperref}
\usepackage{amssymb, amsmath, graphicx, mathrsfs, enumitem, aliascnt}

%%%%%%%%%%%%%%%%%%%%%%%%%%%%% Rings and Fields

\newcommand{\Q}{\mathbb{Q}}

\newcommand{\Z}{\mathbb{Z}}
\newcommand{\C}{\mathbb{C}}
\newcommand{\F}{\mathbb{F}}

%%%%%%%%%%%%%%%%%%%%%%%%%%%%% Alegbraic Groups

\newcommand{\SL}{\mathrm{SL}}

\newcommand{\PSL}{\mathrm{PSL}}
\newcommand{\Hom}{\mathrm{Hom}}

%%%%%%%%%%%%%%%%%%%%%%%%%%%%% Abbreviations

\usepackage{color}

\newtheorem{theorem}{Theorem}[section]

\newaliascnt{lemma}{theorem}
\newtheorem{lemma}[lemma]{Lemma}
\aliascntresetthe{lemma}

\newaliascnt{cor}{theorem}

\aliascntresetthe{cor}

\newaliascnt{prop}{theorem}

\aliascntresetthe{prop}

\newaliascnt{con}{theorem}

\aliascntresetthe{con}

\newaliascnt{defn}{theorem}
\newtheorem{defn}[defn]{Definition}
\aliascntresetthe{defn}

\theoremstyle{remark}
\newaliascnt{remark}{theorem}
\newtheorem{remark}[remark]{Remark}
\aliascntresetthe{remark}

\newaliascnt{claim}{remark}
\newtheorem{claim}[claim]{Claim}
\aliascntresetthe{claim}

\newaliascnt{question}{remark}
\newtheorem{question}[question]{Question}
\aliascntresetthe{question}

\def\sek~{\S{}}

\numberwithin{equation}{section}

\begin{document}

\title[Surfaces as intersections in the Character Variety]{Detecting essential surfaces as intersections in the Character Variety}

\author{Michelle Chu}
\address{Department of Mathematics, University of Texas at Austin, Austin, TX 78750, USA}
\curraddr{}
\email{mchu@math.utexas.edu}

\begin{abstract}
We describe a family of hyperbolic knots whose character variety contain exactly two distinct components of characters of irreducible representations. The intersection points between the components carry rich topological information. In particular, these points are non-integral and detect a Seifert surface.
\end{abstract}

\maketitle

\section{Introduction}
The $\SL_2\C$ character varieties of the fundamental groups of hyperbolic 3-manifolds carry a lot of topological information. In particular, in \cite{cullershalen}, Culler and Shalen developed a technique to detect embedded essential surfaces in a 3-manifold that arise from non-trivial actions of the fundamental group on a tree arising from ideal points in the $\SL_2\C$ character variety. The $\SL_2\C$ character variety of a hyperbolic knot group contains multiple components including the canonical component, which contains the character of a holonomy representation, and a component containing the characters of reducible representations. We address the following question:

\begin{question} \label{ques: 1}
How do multiple components in the $\SL_2\C$ character variety interact? In particular, what can we say about the characters in the intersection between multiple components?
\end{question}

In this paper we consider a family of two-bridge knots whose character varieties contain two distinct curves containing characters of irreducible representations. For this family, the existence of multiple curves was known to Ohtsuki in \cite{ohtsuki} and the existence of exactly two distinct curves was shown by Macasieb, Peterson, and van Luijk in \cite{smooth}. The main result of this paper is the following theorem.

\begin{theorem}\label{intro thm}
There exists infinitely many two-bridge knots having two distinct algebraic curve components of irreducible representations in their character varieties and whose intersections points detect a Seifert surface.
\end{theorem}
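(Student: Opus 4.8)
The plan is to make the family of \cite{smooth} completely explicit and to read off the intersection of its two irreducible-character curves directly from trace coordinates. Writing a two-bridge knot group as $\langle a,b \mid \Omega a = b\Omega\rangle$ with $a,b$ meridians, I would use the coordinates $x = \mathrm{tr}\,\rho(a) = \mathrm{tr}\,\rho(b)$ and $y = \mathrm{tr}\,\rho(ab)$, in which the characters of irreducible representations form an affine plane curve $\{\Phi(x,y)=0\}$. The content of \cite{smooth} is that for the knots in question $\Phi = \Phi_0\,\Phi_1$ splits into exactly two irreducible factors, cutting out the canonical curve $X_0$ and a second curve $X_1$. The intersection $X_0 \cap X_1$ is then the finite set $\{\Phi_0 = \Phi_1 = 0\}$, which I would describe either by eliminating $y$ via a resultant $R(x) = \mathrm{Res}_y(\Phi_0,\Phi_1)$ or by solving the system directly, so that each intersection character appears as an algebraic point whose meridian coordinate $x$ is a root of $R$.

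Next I would establish the arithmetic heart of the statement: that at every such intersection point the meridian trace $x$ is \emph{not} an algebraic integer. Concretely, I would show that $R$ (equivalently, the minimal polynomial of $x$ over $\Q$) is non-monic after clearing denominators, so that some finite place $v$ of the field of definition satisfies $v(x) < 0$. This is exactly the sense in which the intersection points are non-integral, and it is the hypothesis needed to run the non-integral trace method.

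Given a representation $\rho$ at an intersection point with $v(\mathrm{tr}\,\rho(a)) < 0$, I would pass to the completion $k_v$, where $\pi_1(M)$ of the knot complement $M$ acts on the Bruhat--Tits tree $T_v$ of $\SL_2(k_v)$. Since $v(\mathrm{tr}\,\rho(a)) < 0$ forces the eigenvalues of $\rho(a)$ to have nonzero valuation, the meridian acts as a hyperbolic isometry; hence the action has no global fixed point, it is nontrivial, and the Culler--Shalen/Stallings machinery of \cite{cullershalen} produces an essential surface $S \subset M$ dual to the action. To identify $S$ as a Seifert surface I would analyze the peripheral subgroup $\langle \mu,\lambda\rangle$ acting on $T_v$: the meridian $\mu = a$ translates, while I would verify that the longitude trace $\mathrm{tr}\,\rho(\lambda)$ is $v$-integral, so that $\lambda$ fixes a point and has translation length $0$. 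Because an abelian group acting on a tree with one translating and one fixed generator forces the fixed generator to generate the kernel of the translation-length homomorphism, the boundary slope of $S$ is longitudinal, i.e. $0$, and $S$ is a Seifert surface.

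The main obstacle I expect is the uniform arithmetic across the infinite family: I must show, for infinitely many parameters, both that $R$ has a non-integral root (non-integrality of $x$) and that the longitude trace remains $v$-integral at the same place, so that the meridian translates while the longitude does not. Proving these valuation facts uniformly—rather than knot-by-knot—will require tracking the intersection points as functions of the family parameter and is precisely where the explicit structure of $\Phi_0$ and $\Phi_1$ from \cite{smooth} must be exploited. A secondary point is to confirm that the surface the machine returns is genuinely a spanning surface, and not merely some essential surface of longitudinal boundary slope; this I would settle by combining the incompressibility coming from the nontrivial splitting with the boundary-slope computation above.
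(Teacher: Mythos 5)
Your high-level strategy is the same as the paper's: show the meridian trace is non-integral at every affine intersection point, show the longitude trace is integral at the same place, and conclude via the tree action (this is exactly the Schanuel--Zhang criterion, \autoref{lem: schanuelzhang}) that the detected boundary slope is $0$. But the two steps you flag as ``obstacles'' are precisely the mathematical content, and your plan supplies no mechanism for either. For the arithmetic: a resultant $R(x)=\mathrm{Res}_y(\Phi_0,\Phi_1)$ of the plane models is intractable uniformly in the family parameter, and ``show $R$ is non-monic'' is not an argument --- moreover non-monicity would only give \emph{some} non-integral root, while one needs non-integrality at \emph{every} affine intersection point, together with integrality of the longitude trace \emph{at the same place}. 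The paper gets all of this by passing to the smooth model $D(\Gamma_n)$ of \cite{smooth}, where the canonical component is the line $r=t$, so the intersection points are the roots of the single explicit polynomial $G_n=g_{n+1}'g_n-g_{n+1}g_n'$ (\autoref{rcoor}); the identity $(u+2)G_n=f_{2n}+2n$ reduces modulo a prime $\mathfrak p$ over $2$ to $G_n\equiv f_n^2$, so $v(f_n^2(r_0))>0$ at every root, whence $x_0^2=2+r_0-\frac{1}{f_n^2(r_0)}$ is non-integral. The longitude integrality is then not a routine ``verification'' but the flip side of the same formula: the longitude is the commutator $S_1S_2^{-1}S_1^{-1}S_2$ of the Seifert-surface generators, and after the trace computation of \autoref{trprodSs} its trace is a $\Z$-polynomial in $r_0$ and $x_0^2f_n^2(r_0)=(2+r_0)f_n^2(r_0)-1$, which is integral exactly because the non-integrality of $x_0^2$ is confined to the factor $1/f_n^2(r_0)$. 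Without some such structural identity your ``track the intersection points as functions of the parameter'' step has no content, and it is not at all automatic that the longitude stays integral where the meridian fails to be.

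The second gap is your final one, and your proposed fix does not work: an essential surface with boundary slope $0$ need not be a Seifert surface, and ``incompressibility coming from the splitting'' plus the slope computation cannot rule out a separating (non-spanning) slope-$0$ surface --- the longitude is null-homologous, so every slope-$0$ boundary is homologically invisible and no such soft argument distinguishes spanning from separating. The paper needs a genuinely separate topological input (\autoref{lem: slope 0 is Seifert}): using Hatcher--Thurston's classification of incompressible surfaces in two-bridge knot complements \cite{hatcherthurston}, one lists the continued-fraction expansions $[2n-1,-1,2n-1]$, finds the four minimal edge paths, computes that only the branched surface $\Sigma[2n,2n]$ carries slope $0$, and checks that the surfaces it carries are single-sheeted, orientable and non-separating, hence genus-$1$ Seifert surfaces. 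So your proposal correctly identifies the skeleton of the proof but leaves open both load-bearing steps: the uniform mod-$2$ valuation argument through $G_n$, and the Hatcher--Thurston analysis identifying slope-$0$ essential surfaces as Seifert surfaces.
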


As is well known, components of the character varieties of two-bridge knots have the structure of algebraic curves which lie naturally in $\C P^2$ (see \autoref{subsec: char var std}). As such, Bezout's theorem guarantees finitely many points of intersection between any two curves. Some of these points are ideal, so that, following the methods in \cite{cullershalen}, they detect essential surfaces. It turns out that for this family, affine intersection points determine characters of algebraic non-integral, irreducible representations and also give interesting topological information. We once again obtain non-trivial actions on a tree, and hence also detect essential surfaces  (see \autoref{subsec: ani}).

Interestingly, the characters in the intersection are non-integral over the prime 2. In addition to these, one can check by explicit computation that the character varieties for the two-bridge knots $7_{7}$, $8_{11}$, $9_{6}$, $9_{10}$, $9_{17}$, $10_{5}$, $10_{9}$ and $10_{32}$ contain exactly two distinct curves of irreducible representations. It is also true in these examples that affine intersections between multiple curves are algebraic non-integral, correspond to irreducible representations, and furthermore, the trace of the meridian in these representations fails to be integral by a prime over 2. 

There appears to be no algebro-geometric reason as to why these affine intersection points are non-integral, and in particular non-integral by a prime over 2. For instance, computed examples of affine intersection points between curves of characters for two different knots were sometimes integral and other times not. This data suggests the following questions.

\begin{question}
Suppose $K$ is a hyperbolic two-bridge knot with multiple components of characters of irreducible representations in its character variety. When are intersection points between these components algebraic non-integral? When are they non-integral over the prime 2? What slopes are detected? What happens for general knots?
\end{question}

\subsection{Outline}
The paper is organized as follows. In \autoref{sec:prelim} we give the necessary background on character varieties, two-bridge knots, boundary slopes and actions on trees associated to algebraic non-integral representations. We also introduce the family of two-bridge knots of interest in this paper. \autoref{sec:models} builds on the work of Macasieb, Petersen and van Luijk in \cite{smooth}. We construct the character variety, define the smooth variety birationally equivalent to the character variety introduced in \cite{smooth}, and then use the birational equivalence to describe the points of intersection between components. In \autoref{sec:proofs} we state and prove a precise version of \autoref{intro thm} and determine the surfaces detected by affine intersection points. In \autoref{sec:examples} we describe in detail the intersection points for the first two knots in the family. In \autoref{sec:final} we make some final remarks on the existence of multiple components and describe examples of other two-bridge knots with multiple components.

\subsection{Acknowledgments}
The author thanks her advisor Alan W. Reid for his generous guidance, support, feedback and encouragement. The author also thanks Daryl Cooper, Cameron Gordon, Darren Long, Matthew Stover and Anh Tran for insightful conversations and suggestions. Finally, we thank the anonymous referee for their comments and in particular for pointing out that the knots in question do not have a unique Seifert surface and for suggesting \autoref{lem: slope 0 is Seifert}.

%%%%%     %%%%%     %%%%%     %%%%%     %%%%%     %%%%%     %%%%%
\section{Preliminaries}\label{sec:prelim}

\subsection{Character varieties}\label{subse:charvarintro}
We begin with some background on representation varieties and in particular character varieties. For more on this material see \cite{cullershalen}. %Throughout this paper we use the non-standard but convenient notation used in \cite{smooth}; namely, we will refer to the subvariety containing the characters of irreducible representations as the character variety.

Let $\Gamma$ be a finitely generated group. The $\SL_2\C$-representation variety of $\Gamma$ is the set $\mathrm{R}(\Gamma)=\Hom(\Gamma,\SL_2\C)$ and has the structure of an affine algebraic set over $\Q$ with coordinates given by the matrix entries of the images of the the generators of $\Gamma$.

The $\SL_2\C$ character variety of $\Gamma$ is the set $\widetilde{X}(\Gamma)=\{\chi_\rho:\rho\in\mathrm{R}(\Gamma)\}$
where the character $\chi_\rho:\Gamma\rightarrow\C$ is the map defined by $\chi_\rho(\gamma)=tr(\rho\gamma)$ for all $\gamma\in\Gamma$.
For all $\gamma\in\Gamma$ define the map $t_\gamma:\mathrm{R}(\Gamma)\rightarrow\C$ by $t_\gamma(\rho)=\chi_\rho(\gamma)$. The ring $\mathrm{R}$ generated by 1 and the maps $t_\gamma$ for $\gamma\in\Gamma$ turns out to be finitely generated by, say, $\{  t_{\gamma_1},\dots,t_{\gamma_m} \}$ for some elements $\gamma_1,\dots,\gamma_m \in\Gamma$. It follows that a character $\chi_\rho\in\widetilde{X}(\Gamma)$ is determined by its values on the finitely many elements $\gamma_1,\dots,\gamma_m\in\Gamma$. We get that $\widetilde{X}(\Gamma)$ has the structure of a affine algebraic set in $\C^m$ with coordinate ring $R$. Different sets of generators for $\mathrm{R}$ give different models for $\widetilde{X}(\Gamma)$ which are all isomorphic over $\Z$.

An $\SL_2\C$ representation $\rho\in\mathrm{R}(\Gamma)$ is reducible if, up to conjugation, $\rho(\gamma)$ is upper triangular for every $\gamma$, and otherwise irreducible. An $\SL_2\C$ representation $\rho\in\mathrm{R}(\Gamma)$ is abelian if its image is abelian, and otherwise nonabelian. Every irreducible representation is nonabelian. However, there exist reducible nonabelian representations. The set of characters of abelian representations $X_{ab}(\Gamma)$ is itself a variety. Let $X_{na}(\Gamma)$ be the Zariski closure of $\widetilde{X}(\Gamma)-X_{ab}(\Gamma)$ and denote it by $X(\Gamma)$. %Here we abuse notation and call $X(\Gamma)$ the character variety of $\Gamma$.

If two representations $\rho,\rho'\in\mathrm{R}(\Gamma)$ are conjugate, then $\chi_\rho=\chi_{\rho'}$. Also if $\chi_\rho=\chi_{\rho'}$ and $\rho$ is irreducible then $\rho$ and $\rho'$ are conjugate. Therefore, when considering irreducible representations, we may think of $X(\Gamma)$ as the space of irreducible representations modulo conjugation.

Whenever $\Gamma$ is the fundamental group of an orientable, complete hyperbolic 3-manifolds of finite volume, there is an irreducible component of $X(\Gamma)$ containing the character of a holonomy representation of the 3-manifold. This component is called the {\it canonical component}.

One can also define the $\PSL_2\C$-character variety (see \cite[\S 3]{seminorms} \cite[\S 2.1]{longreid},\cite[\S 2.1.2]{smooth}). In the case of $\Gamma$ a knot group, the $\PSL_2\C$-character variety $\widetilde{Y}(\Gamma)$ is the quotient $\widetilde{X}(\Gamma)/\Hom(\Gamma,\pm1)$ where $\pm1$ is the kernel of the homomorphism $\SL_2\C\rightarrow\PSL_2\C$. It has as coordinate ring the subring of $R$ of elements invariant under $\pm1$. Let $Y(\Gamma)$ denote the image of $X(\Gamma)$ in $\widetilde{Y}(\Gamma)$. %We again abuse notation and refer to $Y(\Gamma)$ as the $\PSL_2\C$ character variety of $\Gamma$.

\subsection{Two-Bridge Knots}\label{subsec:2 bridge}
Two-bridge knots are those non-trivial knots admitting a knot diagram with two maxima. Every two-bridge knot is associated to a two-bridge normal form $(p,q)$ where $p$ and $q$ are integers with $p$ odd and $0<q<p$. Whenever $q\neq 1$, the associated knot is hyperbolic. Two knots with two-bridge normal forms $(p,q)$ and $(p',q')$ are equivalent if and only if $p=p'$ and either $q=q'$ or $qq'\equiv\pm1\mod p$.

The knot group corresponding to the two-bridge normal form $(p,q)$ has a presentation $\langle a,b : aw=wb\rangle$ where $a,b$ are meridians and $w=a^{\epsilon_1}b^{\epsilon_2} \cdots a^{\epsilon_{p-2}}a^{\epsilon_{p-1}}$ with $\epsilon_i=(-1)^{\lfloor iq/p \rfloor}$ and $\lfloor\cdot\rfloor$ the floor function (see \cite[Proposition 1]{riley3},\cite[Proposition 1]{mayland}). For this presentation the preferred meridian is given by $a$ and the corresponding preferred longitude is given by $ww^*a^{-2e(w)}$ where $w^*$ is $w$ written backwards and $e(w)=\sum\epsilon_i$ so that the total exponent sum of the longitude is 0 (see \cite[\S 2]{apoly}).

\subsection{A Family of Two-Bridge Knots}\label{subsec:family}

\begin{figure}
\begin{minipage}{.49\textwidth}
\centering
\includegraphics[width=92pt]{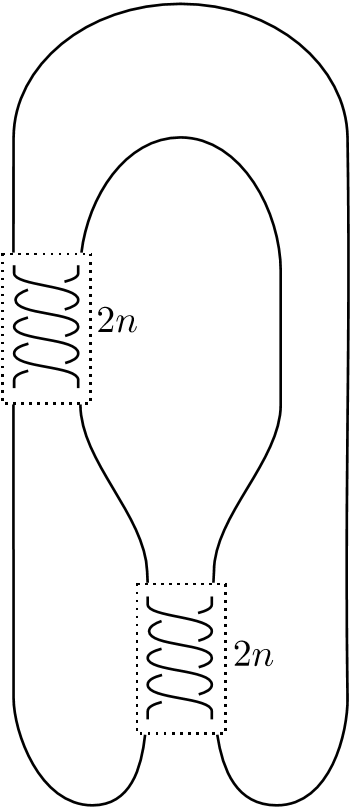}
\caption{The two-bridge knot $J(2n,2n)$}\label{fig:knots}
\end{minipage}
\begin{minipage}{.49\textwidth}
\centering
\includegraphics[width=100pt]{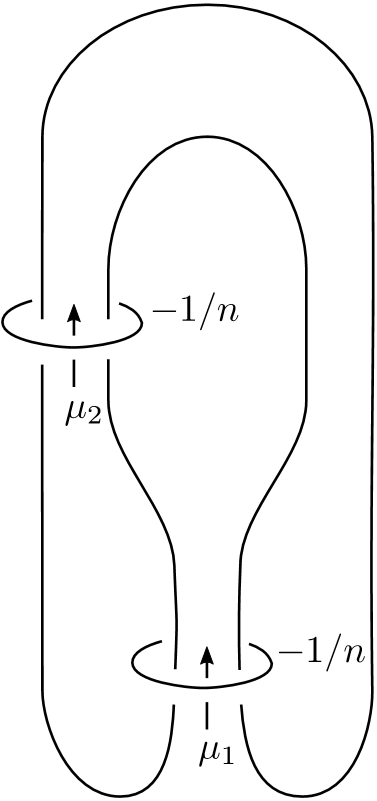}
\caption{The Borromean rings}\label{fig:ringssurg}
\end{minipage}
\end{figure}

The knots to be considered in this paper are the members of the family of hyperbolic two-bridge knots $J(2n,2n)$, $n\geq 2$, with two-bridge normal form $(4n^2-1, 4n^2-2n-1)$. Note that this form is equivalent to $(4n^2-1, 2n)$.  These have knot diagrams as shown in \autoref{fig:knots} and are obtained as  $-\frac{1}{n}$ and $-\frac{1}{n}$ surgeries on two components of the Borromean rings as in \autoref{fig:ringssurg}. The first knot in this family, $J(4,4)$, is the knot $7_4$ in the knot tables with two-bridge normal form $(15,11)$.

The knot group $\Gamma_n$ for $J(2n,2n)$ can be computed as in \cite[Proposition 1]{apoly}. It has presentation
\begin{equation}\label{presentation}
 \Gamma_n=\pi_1(S^3\setminus J(2n,2n)) =  \langle a,b : aw^n=w^nb\rangle
\end{equation}
where $w=(ab^{-1})^n(a^{-1}b)^n $. As in \autoref{subsec:2 bridge}, the preferred meridian is $a$ with corresponding preferred longitude $(w^n)(w^n)^*$.

\begin{figure}
\centering
\includegraphics[width=200pt]{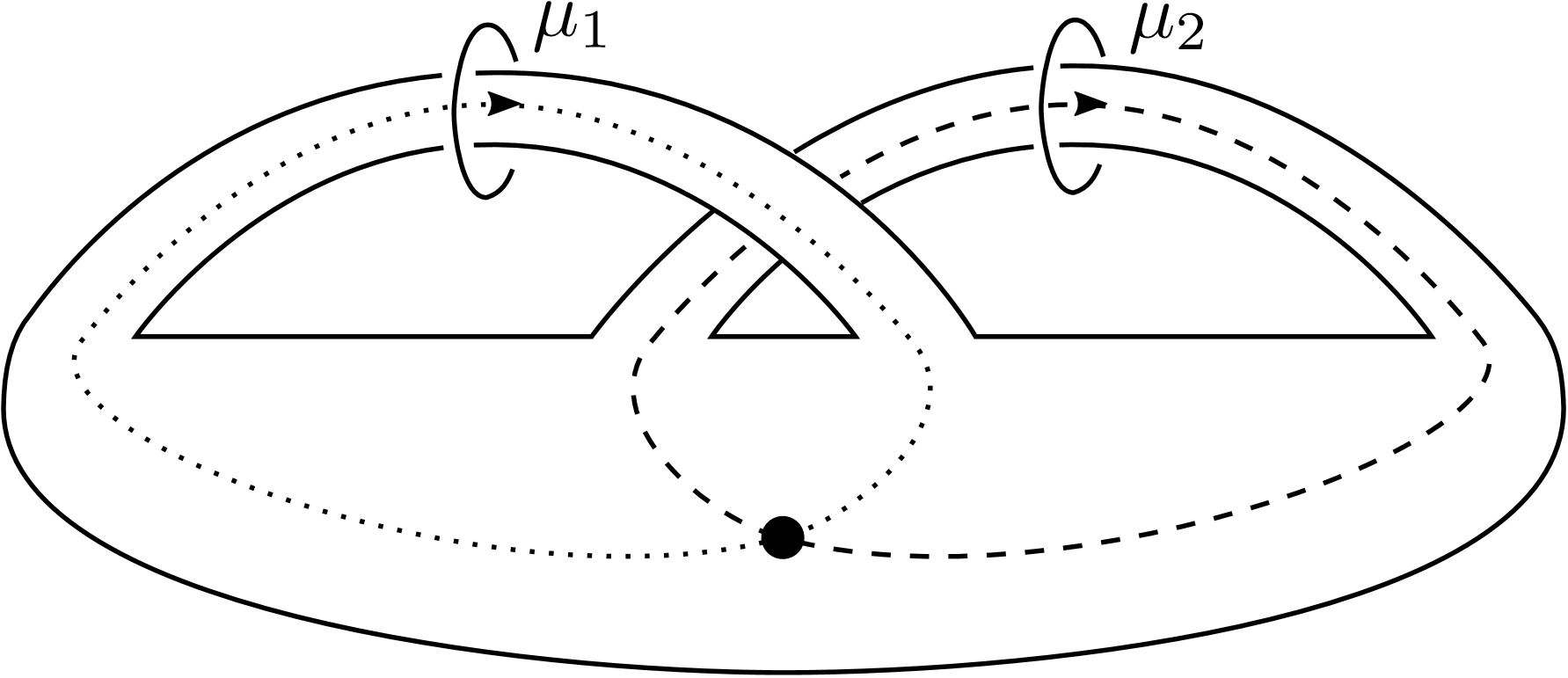}
\caption{Borromean rings after isotopy}\label{fig:ringsiso}
\end{figure}

These knots have an orientable Seifert surface of genus 1 whose fundamental group is generated by the images of the meridians $\mu_1$ and $\mu_2$ after the two Dehn surgeries (see \autoref{fig:ringsiso} and \autoref{fig:knotseifert}). From the proof of \cite[Proposition 1]{apoly}, these correspond to
\begin{equation}\label{eq: seifert gens}
s_1=\left((a b^{-1})^n(a^{-1} b)^n\right)^n \text{ and } s_2=(a b^{-1})^n .
\end{equation}
These two elements generate a free subgroup and their commutator $s_1 s_2^{-1} s_1^{-1} s_2$ corresponds to the preferred longitude. We note that this is not a unique Seifert surface. In fact, it can be shown following \cite{hatcherthurston} that there are two non-isotopic Seifert surfaces.

\begin{figure}
\centering
\includegraphics[width=200pt]{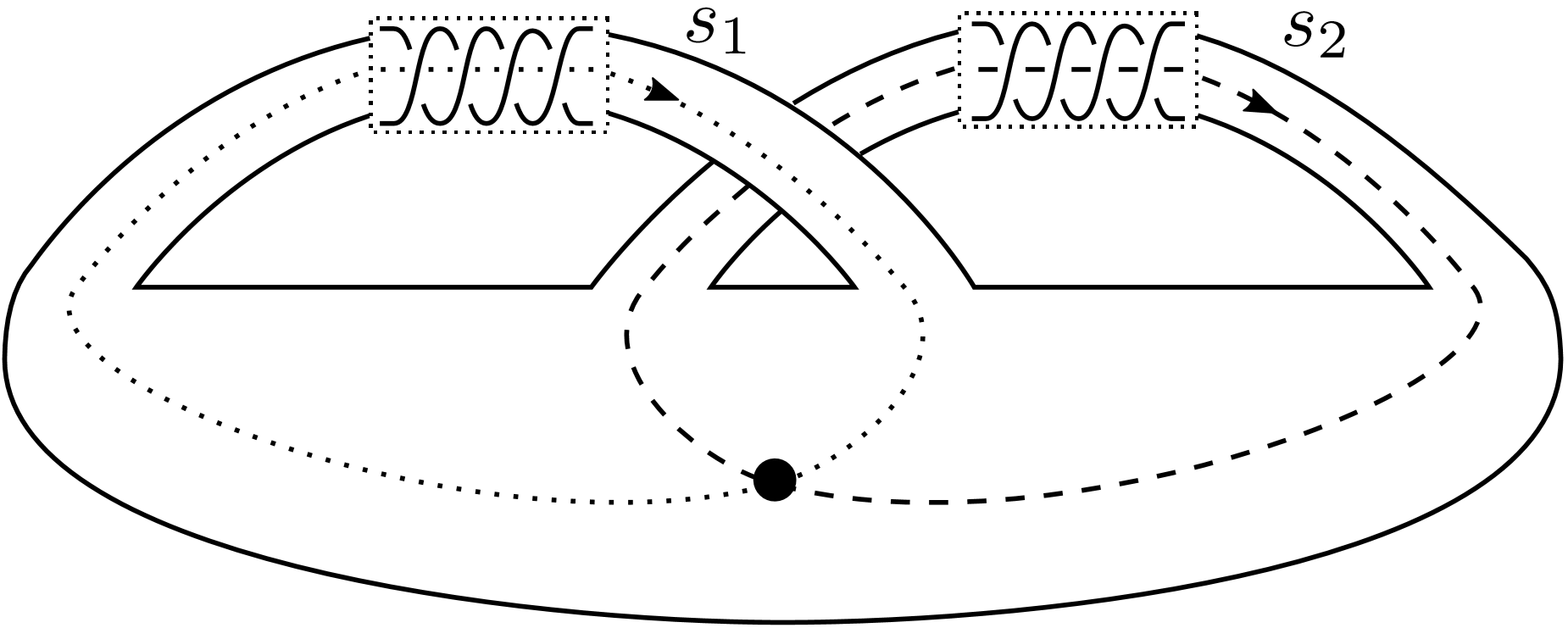}
\caption{A Seifert surface for the knot $J(2n,2n)$}\label{fig:knotseifert}
\end{figure}

\subsection{Boundary Slopes} \label{subsec:bslope} 
An essential surface in a 3-manifold is a properly embedded orientable incompressible surface which is not boundary parallel. Let $V:=V(K)$ denote the exterior of the knot $K$. Any embedded essential surface $S$ with non-empty boundary in $V$ will have non-empty boundary $\partial S=S\cap\partial V$, a collection of disjoint circles on the torus boundary of $V$. Since these circles are disjoint, they represent the same element in the fundamental group of the boundary torus. We identify $\pi_1(\partial V)$ with the group $\Z\times\Z$ where the factors are generated by the preferred meridian and the preferred longitude. Therefore, these circles in $\partial S$ correspond to a class $(p,q)\in\Z\times\Z$ where $p$ and $q$ are relatively prime. We call $p/q$ the slope of $S$, and represent it in $\pi_1(V)$ by the element $M^p L^q$ where $M$ is the meridian and $L$ the longitude. We say that $p/q$ is a boundary slope for $K$ if there is an essential surface $S$ in $V$ with slope $p/q$. We call the class $(0,1)$ the 0-slope and the class $(1,0)$ the $\infty$-slope.

Note that two-bridge knots have small exteriors, that is, they do not contain closed embedded essential surfaces (see \cite{hatcherthurston}).

\subsection{Algebraic Non-Integral Representations and actions on the tree} \label{subsec: ani}
The topics in this section can be found in \cite[\S II.1]{serre}, \cite[\S1,\S2.3]{cullershalen}, \cite[\S3]{handbook} and \cite[\S1]{schanuelzhang}. The description of the tree and the action follows from \cite[\S3]{handbook}.

Let $H$ be a number field with a discrete valuation $v:H^*\rightarrow \mathbb{\Z}\cup \{\infty\}$. There is a canonical way to construct a simplicial tree $T_{H,v}$ on which $\SL_2(H)$ acts without inversion. This construction was described by Serre in this form, but was previously discovered by Bruhat and Tits. Let $\mathcal{O}_v$ be the valuation ring and let $\pi$ be a choice of uniformiser. Define the graph $T_{H,v}$ with vertices given by the homothety classes of lattices in $H^2$ and an edge between two vertices if there exists representative lattices $\Lambda_0$ and $\Lambda_1$ and a linear automorphism $M$ of $H^2$ of determinant $\beta$ with $v(\beta)=1$ which maps $\Lambda_0$ onto $\Lambda_1$. It turns out that $T_{H,v}$ is a tree and $\SL_2(H)$ acts on it simplicially and without inversions by the action induced from the action on $H^2$.

When the fundamental group $\Gamma=\pi_1(V)$ has a representation $\rho$ into the group $\SL_2(H)$, there is an induced action of $\Gamma$ on the tree $T_{H,v}$ via the representation $\rho$. If the action of $\Gamma$ on $T_{H,v}$ is nontrivial, it induces a splitting of $\Gamma=\pi_1(V)$ along an edge stabilizer. By Culler and Shalen, there is an essential surface associated to this action (see \cite[Theorems 2.2.1,2.3.1]{cullershalen}). The fundamental group of this associated essential surface is contained in an edge stabilizer. We say such an essential surface is detected by the representation $\rho$.

Whenever there is an element $\gamma\in\Gamma$ with $v(tr(\rho\gamma))<0$, the action  of $\Gamma$ on $T_{H,v}$ is nontrivial. In particular, consider a representation $\rho: \Gamma\rightarrow \text{SL}_2(H)$ where $H$ is an algebraic number field and such that there is some element $\gamma\in \Gamma$ with $tr(\rho\gamma)$ not an algebraic integer. We call such a representation an {\it algebraic non-integral representation}. Then there is some prime ideal $\mathcal{P}$ in $\mathcal{O}_v$ such that $v_\mathcal{P}(tr(\rho(\gamma)))<0$. 

The following lemma is a restatement of Corollary 3 of \cite{schanuelzhang}. It describes how to determine the slope detected by a representation with an algebraic non-integral character.

\begin{lemma}\label{lem: schanuelzhang}
Let $V$ be a hyperbolic knot exterior and $\rho:\pi_1(V)\rightarrow\SL_2(k)$ a representation where $k$ is a number field. If $\chi_\rho(\gamma)$ is not an algebraic integer for some slope $\gamma\in\partial V$ but $\chi_\rho(\delta)$ is an algebraic integer for another slope $\delta\in\pi_1(\partial V)$, then
$\chi_\rho$ detects an essential surface $S$ in $V$ with boundary slope $\delta$. 
\end{lemma}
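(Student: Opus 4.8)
The plan is to realize the valuation and tree action promised in \autoref{subsec: ani}, and then to read off the boundary slope from the way the peripheral subgroup acts on the axis of $\rho(\gamma)$. Since $\chi_\rho(\gamma)=\mathrm{tr}(\rho(\gamma))$ is not an algebraic integer, there is a prime $\mathcal P$ of $k$, with associated discrete valuation $v=v_{\mathcal P}$, such that $v(\mathrm{tr}(\rho(\gamma)))<0$. First I would record the standard valuation criterion for the action on $T_{k,v}$: an element $g\in\SL_2(k)$ acts hyperbolically, with an invariant axis and positive translation length, precisely when $v(\mathrm{tr}(g))<0$, and otherwise fixes a point of the tree. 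This follows by diagonalizing $g$ over a quadratic extension and comparing the valuations of its two eigenvalues $\lambda,\lambda^{-1}$, using that $v(\lambda)+v(\lambda^{-1})=0$. Applied to our chosen $v$, this makes $\rho(\gamma)$ hyperbolic, so the action of $\Gamma=\pi_1(V)$ on $T_{k,v}$ is nontrivial, and \cite[Theorems 2.2.1, 2.3.1]{cullershalen} produce an essential surface $S$ detected by $\rho$, whose fundamental group lies in an edge stabilizer. On the other hand, since $\chi_\rho(\delta)$ is an algebraic integer we have $v(\mathrm{tr}(\rho(\delta)))\geq 0$ for this (indeed every) valuation, so $\rho(\delta)$ fixes a point of $T_{k,v}$.

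The core of the argument is to identify the boundary slope of $S$ with $\delta$, and for this I would restrict attention to the peripheral subgroup $P=\pi_1(\partial V)\cong\Z\times\Z$. Because $P$ is abelian and $\rho(\gamma)$ is hyperbolic, every element of $\rho(P)$ commutes with $\rho(\gamma)$ and therefore preserves its axis $A\cong\R$; commuting with a nontrivial translation additionally forces each such element to preserve the orientation of $A$, so $\rho(P)$ acts on $A$ through a homomorphism $\tau\colon P\to\R$ recording signed translation length. Here $\tau(\gamma)\neq 0$ since $\rho(\gamma)$ is hyperbolic, while $\tau(\delta)=0$ since $\rho(\delta)$ is elliptic and hence has zero translation length, so that $\rho(\delta)$ fixes $A$ pointwise. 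As $\tau$ is a nonzero homomorphism on $\Z^2$, its kernel has rank one; since $\delta$ is a primitive class (a slope corresponds to coprime $(p,q)$) lying in this kernel, $\ker\tau=\langle\delta\rangle$. Thus the peripheral elements fixing a point of $T_{k,v}$ are exactly the multiples of $\delta$, and $\rho(\delta)$ in fact fixes every edge along $A$. In the Culler--Shalen duality the curves of $\partial S\subset\partial V$ are the level sets of the translation-length (height) function on the peripheral torus and run parallel to the kernel direction $\delta$, so $\partial S$ has slope $\delta$; in particular $P$ does not fix a point of the tree, so $S$ is not closed and has nonempty boundary.

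The step I expect to be the main obstacle is the last one: translating the purely algebraic computation of which peripheral elements act elliptically into the topological statement that the detected surface has boundary slope exactly $\delta$. This requires being careful about how the dual surface of \cite{cullershalen} meets the boundary torus, and confirming that its boundary curves run parallel to, rather than transverse to, the elliptic direction $\langle\delta\rangle$. This is precisely the content of \cite[Corollary 3]{schanuelzhang}, which upgrades the ideal-point boundary-slope dictionary of Culler and Shalen to representations that are non-integral at a finite place. The remaining ingredients, namely the eigenvalue--valuation criterion and the structure of abelian group actions on a tree, are standard and may be cited from \cite{serre}.
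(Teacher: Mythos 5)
Your proposal is correct and ultimately takes the same route as the paper: the paper offers no independent argument, stating only that the lemma is a restatement of \cite[Corollary 3]{schanuelzhang}, which is exactly the citation you invoke at the decisive slope-identification step. Your additional unpacking of the valuation criterion, the hyperbolicity of $\rho(\gamma)$, and the peripheral translation-length homomorphism with kernel $\langle\delta\rangle$ is a sound sketch of what lies inside that citation, so it supplements rather than diverges from the paper's treatment.
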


%%%%%     %%%%%     %%%%%     %%%%%     %%%%%     %%%%%     %%%%%
\section{Character Variety of $J(2n,2n)$}\label{sec:models}
\subsection{Character Varieties: The Standard Model}\label{subsec: char var std}
Consider the knot $J(2n,2n)$ with knot group presentation as in \autoref{presentation}. The generators $a$ and $b$ are conjugate in the group, so $t_a=t_b=t_{b^{-1}}$. We may take $t_a$ and $t_{ab^{-1}}$ as the generators for the ring $R$ defined in \autoref{subse:charvarintro} and also as coordinates in $X(\Gamma_n)$ (see \cite[Proposition 1.4.1]{cullershalen}).

A nonabelian representation $\rho_0\in R(\Gamma_n)$ with $t_a(\rho_0)=x$ and $t_{ab^{-1}}(\rho_0)=r$ is conjugate in $SL_2\C$ to a representation $\rho$ with $A=:\rho(a)$ and $B=:\rho(b)$ given by
\begin{equation}
 A= \begin{pmatrix} \mu & 1 \\ 0 & \mu^{-1} \end{pmatrix}
 \hspace{8pt}\text{and}\hspace{8pt}
 B= \begin{pmatrix} \mu & 0 \\ 2-r & \mu^{-1} \end{pmatrix}.
\end{equation}
This $\rho_0$ is reducible exactly when $r=2$.

We set $x=tr(A)$ and $r=tr(AB^{-1})$. Choose $\mu$ for which $x=\mu+\mu^{-1}$ and let $W_n=(AB^{-1})^n(A^{-1}B)^n$. Then an assignment of $\mu$ and $r$ extends to a representation if and only if $AW_n^n=W_n^nB$. This results in four equations on $\mu$ and $r$, one for each matrix coordinate. However, the vanishing set of these four equations can be defined by a single equation in $x$ and $r$ which is independent of the choice between $\mu$ and $\mu^{-1}$ (see \cite[Theorem 1]{riley2}).

\begin{defn}
Let $f_0(u)=0$, $f_1(u)=1$ and define $f_{j+1}(u) = u\cdot f_j(u)-f_{j-1}(u)$. Define also $g_j(u)=f_j(u)-f_{j-1}(u)$.
\end{defn}

The variety $X(\Gamma_n)$ is defined as the vanishing set of the polynomial
\begin{equation}\label{X}
f_n(t)\left( f_n(r)g_n(r)(-x^2+2+r) -1 \right)+f_{n-1}(t)
\end{equation}
where 
\begin{equation}\label{t}
t=tr(W_n)=(2-r)(x^2-2-r)f_n^2(r)+2
\end{equation}
and $W_n=(AB^{-1})^n(A^{-1}B)^n$ (see \cite[Proposition 3.8]{smooth}).

The variety $X(\Gamma_n)$ is an affine algebraic curve. Also, it is the double cover of the variety $Y(\Gamma_n)$ with variables $(r,y)$ via the covering map
\begin{align}\label{eq: covering map}
X(\Gamma_n) & \rightarrow Y(\Gamma_n)\\
(r,x) & \mapsto (r,x^2-2) \notag
\end{align}
(see \cite[\S 2.2.2]{smooth}).

Affine algebraic curves may be completed naturally into projective curves by homogenizing their defining polynomials. Therefore, we may think of $X(\Gamma_n)$ and $Y(\Gamma_n)$ as projective curves in $\C P^2$ composed of an affine part and finitely many points of completion, that is, ideal points.

\subsection{Character Varieties: The Smooth Model}\label{subsec: char var smooth}

The varieties $X(\Gamma_n)$ and $Y(\Gamma_n)$ for $J(2n,2n)$ are not smooth at infinity. To get around this, a new projective model $D(\Gamma_n)$ was introduced in \cite{smooth}. This new model is birationally equivalent to $Y(\Gamma_n)$ and each of its irreducible components is smooth.

Let $D(\Gamma_n)$ be the projective closure of the affine variety in the coordinates $r=t_{ab^{-1}}$ and $t=t_w$. It is the vanishing set of the polynomial
\begin{equation}\label{smoothpoly}
g_{n+1}(r)g_n(t)-g_n(r)g_{n+1}(t).
\end{equation}

\begin{theorem} \label{thm:smooth}
For $n\geq2$ in $\Z$ the following statements hold.
\begin{enumerate}[label=(\arabic*),ref=(\arabic*)]

\item $D(\Gamma_n)$ is birationally equivalent to $Y(\Gamma_n)$ via the map 
	\begin{equation}\label{bieq}
	Y(\Gamma_n)\rightarrow D(\Gamma_n) \text{ given by } (r,y)\mapsto (r,(2-r)(y-r)f_n^2(r)+2).
	\end{equation}

\item\label{r=2} $Y(\Gamma_n)$ and $D(\Gamma_n)$ are isomorphic outside a finite number of points $(r,y)$ in $Y(\Gamma_n)$ given by $(r-2)f_n(r)=0$.

\item\label{line r=t} $D(\Gamma_n)$ consists of two irreducible components: the component $D_0$ defined by the line $r-t$ and the component  $D_1$ defined as the projective closure of the complement of $D_0$. Furthermore, each component is smooth. %over $\Q$.

\item $Y(\Gamma_n)$ has two irreducible components: the canonical component $Y_0$ and the component $Y_1$. Furthermore $Y_0$ is birationally equivalent to $D_0$ and $Y_1$ to $D_1$.

\item $X(\Gamma_n)$ has two irreducible components: the canonical component $X_0$ and the component $X_1$. Furthermore, $X_0$ is the double cover of $Y_0$ and $X_1$ the double cover of $Y_1$ (see \autoref{eq: covering map}).

\end{enumerate}
\end{theorem}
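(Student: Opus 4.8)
The plan is to make the explicit defining polynomials the primary objects and to show that the birational map \eqref{bieq} intertwines them up to a controlled non-zero factor; the component structure and the double cover then follow. Write $F_Y(r,y)$ for the polynomial obtained from \eqref{X} and \eqref{t} by setting $x^2=y+2$ (so that $-x^2+2+r=r-y$), which defines $Y(\Gamma_n)$, and write $F_D(r,t)$ for \eqref{smoothpoly}. First I would record the two identities
\begin{equation*}
g_j(u)=f_j(u)-f_{j-1}(u), \qquad (u-2)\,f_j(u)=g_{j+1}(u)-g_j(u),
\end{equation*}
both immediate from $f_{j+1}=u f_j-f_{j-1}$. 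The engine of the argument is then the polynomial identity
\begin{equation*}
(2-r)\,f_n(r)\,F_Y(r,y)=F_D\!\left(r,\ (2-r)(y-r)f_n^2(r)+2\right),
\end{equation*}
which I would obtain by expressing $F_Y$ in the coordinates $(r,t)$ via \eqref{t} (so that $r-y=-(t-2)/\big((2-r)f_n^2(r)\big)$), clearing the factor $(2-r)f_n(r)$, and collapsing the result using the two identities above.

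Granting this identity, (1) and (2) follow at once. The map $\phi\colon(r,y)\mapsto(r,t)$ of \eqref{bieq} carries $Y(\Gamma_n)$ into $D(\Gamma_n)$, and its rational inverse $\psi\colon(r,t)\mapsto\big(r,\ r+(t-2)/((2-r)f_n^2(r))\big)$ carries $D(\Gamma_n)$ back to $Y(\Gamma_n)$; both are regular and mutually inverse wherever $(2-r)f_n(r)\neq0$. This is the birational equivalence of (1), and reading off the indeterminacy locus gives (2): the isomorphism breaks only over the finitely many points with $(r-2)f_n(r)=0$.

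For (3) I would use that $F_D$ is antisymmetric under $r\leftrightarrow t$, hence divisible by $(r-t)$; thus $F_D=(r-t)\,Q(r,t)$ with $Q$ symmetric. The factor $r-t$ is the line $D_0$, smooth and irreducible in $\C P^2$, and $D_1$ is the closure of $\{Q=0\}$. Establishing that $D_1$ is irreducible and smooth — by the Jacobian criterion on the affine chart, using the distinctness of the (cosine) roots of $f_n,g_n,g_{n+1}$, together with a check at the points at infinity — is the technical heart, and here I would invoke \cite{smooth}. Statement (4) then follows by transporting $D=D_0\cup D_1$ through the birational equivalence: $Y(\Gamma_n)=Y_0\cup Y_1$ with $Y_i=\overline{\psi(D_i)}$ irreducible and birational to $D_i$, where $Y_0$ is cut out by $(y-r)f_n^2(r)+1=0$. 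I would identify $Y_0$ as the canonical component by checking that the discrete faithful character satisfies $(x^2-2-r)f_n^2(r)=-1$, equivalently $r=t$, placing it on $\psi(D_0)$. Finally, for (5) I would pull $Y_0,Y_1$ back under the double cover \eqref{eq: covering map}: the preimage $X_i$ of $Y_i$ is irreducible exactly when $y+2$ is a non-square in $\C(Y_i)$, which I would verify by exhibiting a point where $y+2$ vanishes to odd order, ruling out a splitting into two sheets; this yields exactly the two components $X_0,X_1$, each a double cover of the corresponding $Y_i$.

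The main obstacle is (3): the irreducibility and smoothness of $D_1$, including at infinity, does not follow formally from the intertwining identity and genuinely requires the root structure of the Chebyshev-type polynomials $f_n,g_n,g_{n+1}$ — the part I would draw from \cite{smooth}. A secondary point requiring direct attention is the non-splitting of the double cover in (5), which cannot be read off from the $\PSL_2\C$ picture and must be checked on each component separately.
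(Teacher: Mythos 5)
Your route is genuinely different from the paper's, but only because the paper does not argue at all: its entire proof is the citation ``These statements are given in Proposition 4.4 and Proposition 4.6 of \cite{smooth}.'' You instead reconstruct the substance of those propositions, and the reconstruction is sound. Your engine identity is correct: writing $F_Y$ for the substitution $x^2=y+2$ in \eqref{X} (so $t=(2-r)(y-r)f_n^2(r)+2$) and $F_D$ for \eqref{smoothpoly}, one checks using $(u-2)f_j=g_{j+1}-g_j$ that
\begin{equation*}
(2-r)f_n(r)\,F_Y(r,y)=-g_n(r)\bigl(g_{n+1}(t)-g_n(t)\bigr)+\bigl(g_{n+1}(r)-g_n(r)\bigr)g_n(t)=F_D\bigl(r,t\bigr),
\end{equation*}
exactly as you claim, and together with your explicit inverse $\psi$ this gives (1) and (2) honestly, with the exceptional locus $(r-2)f_n(r)=0$ read off correctly; the antisymmetry argument for $(r-t)\mid F_D$ in (3) is also right, and the degree count $\deg Q=2n-2$ is consistent with the Bezout count used later in \autoref{main}. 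What your approach buys is a self-contained verification of the birational equivalence and the decomposition $D=D_0\cup D_1$, while outsourcing to \cite{smooth} precisely the items you flag as the technical heart: smoothness and irreducibility of $D_1$ (including at the points at infinity, which is why the model $D$ exists in the first place) and the non-splitting of the $\SL_2/\PSL_2$ double cover in (5) — your proposed criteria (Jacobian criterion plus separability of the Chebyshev-type polynomials from \autoref{fhfacts}\ref{fpoly}; odd vanishing order of $y+2$ on each $Y_i$) are the standard ones and match the arguments there. One caveat: the step you present as a routine check, that the discrete faithful character satisfies $r=t$ (equivalently $(y-r)f_n^2(r)=-1$), is itself nontrivial and is part of what \cite{smooth} proves in identifying $Y_0$ as the canonical component; as written it is a plan, not a verification. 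Since the paper cites all five statements wholesale, your proposal is strictly more informative and is correct modulo the same reference.
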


\begin{proof}
These statements are given in Proposition 4.4 and Proposition 4.6 of \cite{smooth}.
\end{proof}

\subsection{Intersections between components} \label{intersections section}
In this section we describe a polynomial $G_n$ which determines the $r$ coordinate of the intersection points of $D_0$ and $D_1$. We show $G_n$ also determines the $r$ coordinate in the affine intersection points of $Y_0$ and $Y_1$ via the birational equivalence, and thus also for the affine intersection points of $X_0$ and $X_1$.

\begin{defn}
Let $g'_{i}=\frac{dg_{i}}{du}$ and define 
\begin{equation}
G_j= g_{j+1}'g_j-g_{j+1}g_j'.
\end{equation}
\end{defn}

\begin{lemma}\label{fhfacts}
For $j\geq2$ in $\Z$ the following statements hold.
\begin{enumerate}[label=(\arabic*),ref=(\arabic*)]
\item\label{fpoly} $f_j$ is monic, separable and of degree $n-1$.
\item\label{hpol} $(u+2) G_j = f_{2j} + 2j$.
\item\label{hpoly} $G_j$ is monic and of degree $2j-2$. 
\item\label{hfnoroot} $G_j$ and $f_j$ do not share a root. 
\item\label{f2j} $f_{2j}=uf_{j}^2-2f_jf_{j-1}$.
\item\label{fj2} $f_j(2)=j$.
\end{enumerate}
\end{lemma}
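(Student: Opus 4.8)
The key structural fact underlying every part is that $g_j$ satisfies the \emph{same} three-term recurrence as $f_j$. The plan is to first verify, by a one-line induction using $g_j = f_j - f_{j-1}$ and the defining recurrence, that
\[
g_{j+1} = u g_j - g_{j-1}, \qquad g_0 = g_1 = 1.
\]
From here \ref{fpoly} is routine: induction on the recurrence shows $f_j$ is monic of degree $j-1$, and the substitution $u = 2\cos\theta$ yields the closed form $f_j(2\cos\theta) = \sin(j\theta)/\sin\theta$, whose $j-1$ roots $2\cos(k\pi/j)$ for $k = 1,\dots,j-1$ are distinct, so $f_j$ is separable. The value \ref{fj2} follows either by letting $\theta\to 0$ in the closed form or by the immediate induction $f_{j+1}(2) = 2f_j(2) - f_{j-1}(2) = 2j - (j-1) = j+1$. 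For \ref{f2j} I would specialize the standard addition formula $f_{m+n} = f_m f_{n+1} - f_{m-1}f_n$ (a quick induction on $n$) to $m=n=j$ and simplify with $f_{j+1} = uf_j - f_{j-1}$, giving $f_{2j} = f_j(f_{j+1} - f_{j-1}) = uf_j^2 - 2f_jf_{j-1}$.

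The crux is \ref{hpol}, and the main idea is that the Wronskian-type quantity $G_j$ telescopes. Differentiating the recurrence gives $g_{j+1}' = g_j + ug_j' - g_{j-1}'$; substituting this together with $g_{j+1} = ug_j - g_{j-1}$ into $G_j = g_{j+1}'g_j - g_{j+1}g_j'$, all terms carrying the factor $u$ cancel and one is left with the clean recursion
\[
G_j = G_{j-1} + g_j^2, \qquad G_0 = 0,
\]
so that $G_j = \sum_{i=1}^j g_i^2$. It then suffices to prove the single-step identity $(u+2)g_j^2 = f_{2j} - f_{2j-2} + 2$ (valid for all $j\geq 1$), since summing the corresponding identities for $i = 1,\dots,j$ telescopes the right-hand side to $f_{2j} - f_0 + 2j = f_{2j} + 2j$. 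Expanding $g_j^2 = (f_j - f_{j-1})^2$, applying \ref{f2j} to both $f_{2j}$ and $f_{2j-2}$, and substituting $f_{j-2} = uf_{j-1} - f_j$, this single-step identity reduces to the classical Cassini-type relation
\[
f_j^2 - u f_j f_{j-1} + f_{j-1}^2 = 1,
\]
which I would prove by checking the left-hand side is constant in $j$ (an immediate induction via the recurrence) with value $1$ at $j=1$. I expect this chain — recognizing the telescoping and isolating the Cassini identity — to be the only genuinely nonroutine step; everything else is bookkeeping.

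With \ref{hpol} established, the remaining parts are short. For \ref{hpoly}, the formula $G_j = \sum_{i=1}^j g_i^2$ shows the top-degree contribution comes solely from $g_j^2$, and since $g_j$ is monic of degree $j-1$, $G_j$ is monic of degree $2j-2$. For \ref{hfnoroot}, let $u_0$ be a root of $f_j$. By \ref{f2j}, $f_{2j}(u_0) = u_0 f_j(u_0)^2 - 2f_j(u_0)f_{j-1}(u_0) = 0$, so \ref{hpol} gives $(u_0+2)G_j(u_0) = 2j$; since the symmetry $f_j(-u) = (-1)^{j-1}f_j(u)$ together with \ref{fj2} gives $f_j(-2) = (-1)^{j-1}j \neq 0$, we have $u_0 \neq -2$ and hence $G_j(u_0) = 2j/(u_0+2) \neq 0$. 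Alternatively, every root of $f_j$ is real and lies in $(-2,2)$, so $G_j(u_0) = \sum_{i=1}^j g_i(u_0)^2 \geq g_1(u_0)^2 = 1 > 0$. This completes all six parts.
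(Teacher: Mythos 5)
Your proof is correct, and for the crucial identity (2) it takes a genuinely different route from the paper's. The paper does essentially everything inside the ring $\Z[u][s]/(s^2-us+1)\cong\Z[s,s^{-1}]$, where $u=s+s^{-1}$ and $f_j=(s^j-s^{-j})/(s-s^{-1})$: separability follows from $(s^{j+1}-s^{j-1})f_j=s^{2j}-1$, part (2) is a direct computation of $G_j$ in the $s$-coordinate, part (5) comes from $f_{2j}=f_j(f_{j+1}-f_{j-1})$, and part (4) is argued in the opposite direction from yours: at a root $\omega=\sigma+\sigma^{-1}$ of $G_j$ one has $f_{2j}(\omega)=(\sigma^j+\sigma^{-j})f_j(\omega)=-2j\neq 0$, forcing $f_j(\omega)\neq 0$. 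You instead exploit that $g_j$ obeys the same three-term recurrence as $f_j$, derive the Wronskian telescoping $G_j=G_{j-1}+g_j^2$ (which checks out: substituting $g_{j+1}=ug_j-g_{j-1}$ and its derivative into the definition of $G_j$ cancels every term carrying $u$), and reduce (2) to the Cassini relation $f_j^2-uf_jf_{j-1}+f_{j-1}^2=1$, proved by constancy in $j$; your direction of (4) needs the additional check $u_0\neq-2$ at a root $u_0$ of $f_j$, which you correctly supply via $f_j(-2)=(-1)^{j-1}f_j(2)=(-1)^{j-1}j\neq 0$. The paper's method buys uniformity — once the Laurent model is set up, all six parts are short closed-form manipulations — while yours buys elementarity (no computation in an auxiliary variable beyond the standard $u=2\cos\theta$ substitution for separability) plus the genuinely new byproduct $G_j=\sum_{i=1}^j g_i^2$, which exhibits $G_j$ as a sum of squares and yields your alternative one-line proof of (4), since the roots of $f_j$ are real and $G_j\geq g_1^2=1$ there. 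Two trivial bookkeeping points: the base case $i=1$ of your single-step identity $(u+2)g_i^2=f_{2i}-f_{2i-2}+2$ implicitly uses $f_{-1}=-1$ (equivalently $g_0=1$), so you should either extend the recurrence backwards by fiat or verify $i=1$ directly; and the degree $n-1$ in the paper's statement of part (1) is a typo for $j-1$, which your proof silently and correctly fixes.
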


\begin{proof}
Proofs for \ref{fpoly}, \ref{hpol} and \ref{hpoly} are found in \cite[Lemma 3.3, Lemma 5.4]{smooth} but we gather the necessary information here for convenience.
%% 1
Recall that $f_0=0$, $f_1=1$ and $f_j=u \cdot f_{j-1}-f_{j-2}$. It follows by induction on $j$ that $f_j$ is monic and of degree $j-1$.
To prove separable consider the ring $\Z[u][s]/(s^2-us+1)\cong \Z[s,s^{-1}]$. Here $u=s+s^{-1}$ and it follows that $f_j(u)=(s^j-s^{-j})/(s-s^{-1})$ by induction on $j$. Consider the following polynomial in  $\Z[u][s]/(s^2-us+1)$:
\begin{align*}
 (s^{j+1}-s^{j-1})f_j & = (s^{j+1}-s^{j-1})\frac{(s^j-s^{-j})}{(s^1-s^{-1})} \\
 & = (s^{2j}-1)\frac{(s^1-s^{-1})}{(s^1-s^{-1})} \\
 & = s^{2j}-1. 
\end{align*}
Since $s^{2j}-1$ is separable, so is $f_j$ and this proves \ref{fpoly}.

%% 2
The identity in \ref{hpol} follows directly from the definition of $G_j$ by considering its image in the ring $\Z[u][s]/(s^2-us+1)$.

%% 3
By \ref{fpoly}, $f_{2j}$ is monic and of degree $2j-1\geq 3$. Using the identity in \ref{hpol}, $G_j$ is also monic and is of degree $(2j-1)-1=2j-2$, proving \ref{hpoly}.

%% 4
Suppose $\omega$ is a root of $G_j$ and pick $\sigma\in\C^*$ such that $\omega=\sigma+\sigma^{-1}$. By \ref{hpol}, $0=f_{2j}(\omega)+2j$ so then
\begin{align*}
-2j & = f_{2j}(\omega) \\
& = \frac{(\sigma^{2j}-\sigma^{-2j})}{(\sigma-\sigma^{-1})}\\
& = (\sigma^{j}+\sigma^{-j}) \frac{(\sigma^{j}-\sigma^{-j})}{(\sigma-\sigma^{-1})}\\
& =(\sigma^{j}+\sigma^{-j})f_j(\omega).
\end{align*}
Then \ref{hfnoroot} follows since this implies $f_j(\omega)\neq 0$.

%% 5
The identity in part \ref{f2j} follows by considering $f_j(u)=(s^j-s^{-j})/(s+s^{-1})$ in the ring $\Z[u][s]/(s^2-us+1)$ to get
$$ f_{2j}=f_{j}(f_{j+1}-f_{j-1})$$ and using the recursive definition for $f_{j+1}$.

%% 6
The identity in part \ref{fj2} follows from induction on $j$.

\end{proof}

We can now describe the points of intersection between $D_0$ and $D_1$.

\begin{lemma}\label{rcoor}
If the point $P=(r_0,t_0)$ is in the intersection of $D_0(\Gamma_n)$ and $D_1(\Gamma_n)$, then $P$ satisfies $G_n(r_0)=G_n(t_0)=0$.
\end{lemma}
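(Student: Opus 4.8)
The plan is to reduce everything to a single polynomial identity. First, since $P$ lies on $D_0$, which is the line $r - t = 0$, we have $r_0 = t_0$; thus $G_n(r_0) = G_n(t_0)$ is automatic, and it suffices to prove $G_n(r_0) = 0$.

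Next I would exploit the shape of the polynomial $F(r,t) := g_{n+1}(r)g_n(t) - g_n(r)g_{n+1}(t)$ defining $D(\Gamma_n)$ in \eqref{smoothpoly}. This $F$ is antisymmetric, $F(r,t) = -F(t,r)$, so it vanishes on the diagonal and is divisible by $t - r$; viewing $F$ as a polynomial in $t$ over $\C[r]$ and dividing by the monic $t-r$, write $F(r,t) = (t-r)H(r,t)$ with $H \in \C[r,t]$. Since $D_1$ is the closure of the part of $D(\Gamma_n)$ off the diagonal — where $F = 0$ and $t \neq r$ force $H = 0$ — the closed set $V(H)$ contains $D_1$, and therefore $P$ satisfies $H(r_0, t_0) = H(r_0, r_0) = 0$.

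The key step is then to recognize $H(r,r)$ as (a sign times) $G_n(r)$. Differentiating $F = (t-r)H$ in $t$ and restricting to $t = r$ annihilates the factor $(t-r)$, giving $H(r,r) = \partial_t F(r,t)|_{t=r}$; and a direct computation yields
\[
\partial_t F(r,t)\big|_{t=r} = g_{n+1}(r)g_n'(r) - g_n(r)g_{n+1}'(r) = -\bigl(g_{n+1}'(r)g_n(r) - g_{n+1}(r)g_n'(r)\bigr) = -G_n(r).
\]
Combined with $H(r_0, r_0) = 0$, this gives $G_n(r_0) = 0$, as desired.

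The step I expect to need the most care is the passage $D_1 \subseteq V(H)$, which implicitly requires the diagonal to appear in $F$ with multiplicity exactly one, so that $H$ does not itself vanish on all of $\{r = t\}$ (which would make the final identity vacuous). This is confirmed a posteriori by the computation above together with \autoref{fhfacts}\ref{hpoly}: since $G_n$ is monic of degree $2n - 2$ it is nonzero, so $H(r,r) = -G_n(r) \not\equiv 0$, hence $(t - r)^2 \nmid F$ and $D_0$ is genuinely distinct from $D_1$. The factorization and the differentiation are otherwise routine.
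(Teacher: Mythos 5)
Your proof is correct, and it takes a genuinely different route from the paper's. The paper's proof never uses that $D_0$ is the diagonal: it observes that an intersection point of two components is a \emph{singular} point of $D(\Gamma_n)$, so that $F(P)=F_r(P)=0$, and then verifies the identities $g_n(r_0)F_r(P)-g_n'(r_0)F(P)=g_n(t_0)G_n(r_0)$ and $g_{n+1}(r_0)F_r(P)-g_{n+1}'(r_0)F(P)=g_{n+1}(t_0)G_n(r_0)$; if $G_n(r_0)\neq 0$ these force $g_n(t_0)=g_{n+1}(t_0)=0$, contradicting the coprimality of $g_n$ and $g_{n+1}$, which follows from $f_ng_n-f_{n-1}g_{n+1}=f_n^2-f_{n-1}f_{n+1}=1$. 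That argument covers intersection points between arbitrary pairs of components (it is the relevant case of \cite[Lemma 5.5]{smooth}) and yields $G_n(r_0)=0$ and $G_n(t_0)=0$ without invoking $r_0=t_0$. You instead exploit the specific geometry of this lemma --- $D_0$ is the line $r=t$ by \autoref{thm:smooth}.\ref{line r=t}, and $F$ is antisymmetric --- to factor $F=(t-r)H$, place $D_1$ inside $V(H)$ by a closure argument, and identify $H(r,r)=\partial_t F(r,t)\big\vert_{t=r}=-G_n(r)$; I have checked each of these steps and they are sound. What your approach buys is elementarity: no singularity input and no coprimality identity. What it gives up is generality: it only treats intersections with the diagonal component, which suffices here but would not handle intersections between two non-diagonal components, as the paper's argument does. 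One small point: your closing worry about the diagonal appearing with multiplicity one is a consistency check, not a logical prerequisite --- the inclusion $D_1\subseteq V(H)$ holds even if $(t-r)^2$ divided $F$, and $G_n(r_0)=0$ would still follow from $H(r_0,r_0)=0$ together with the identity $H(r,r)=-G_n(r)$; \autoref{fhfacts}\ref{hpoly} merely confirms after the fact that this degenerate case does not occur.
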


\begin{proof}
A similar statement is included in \cite[Lemma 5.5]{smooth}. We include a complete proof for the relevant case.

In the ring $\Z[u][s]/(s^2-us+1)$,
$$ f_n^2 - f_{n-1}f_{n+1} = \frac{(s^n-s^{-n})^2}{(s-s^{-1})^2} - \frac{(s^{n-1}-s^{1-n})(s^{n+1}-s^{-n-1})}{(s-s^{-1})^2} = 1.$$
As a polynomial, we have that $f_n^2-f_{n-1}f_{n+1}=1$.

Recall $F=g_{n+1}(r)g_n(t)-g_n(r)g_{n+1}(t)$ is the defining polynomial for $D(\Gamma_n)$. Since $P$ is a point in the intersection of two components, it is a singular point of $D(\Gamma_n)$. Therefore $F_r:=\partial F/\partial r (P)=0$ and $F(P)=0$.
We can then easily check that
$$ 0 = g_n(r_0)F_r(P) - g_n'(r_0)F(P) = g_n(t_0)G_n(r_0) $$
and
$$ 0 = g_{n+1}(r_0)F_r(P) - g_{n+1}'(r_0)F(P) = g_{n+1}(t_0)G_n(r_0). $$
If $G_n(r_0)\neq 0$ then $g_n(t_0)=g_{n+1}(t_0)=0$. However $f_n g_n - f_{n-1} g_{n+1}=f_n^2-f_{n-1}f_{n+1}=1$ implies $g_n$ and $g_{n+1}$ are relatively prime polynomials and contradicts $g_n(t_0)=g_{n+1}(t_0)=0$. Thus $G_n(r_0)=0$ and also $G_n(t_0)=0$.
\end{proof}

\begin{lemma}\label{intptscorr}
The affine parts of $X_0$ and $X_1$ are smooth. Furthermore, their affine intersection points correspond to irreducible representations and are determined 2-to-1 with the intersection points of $D_0$ and $D_1$.
\end{lemma}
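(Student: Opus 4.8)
The plan is to run everything through the morphism $\Phi\colon X(\Gamma_n)\to D(\Gamma_n)$, $(r,x)\mapsto(r,t)$, with $t$ the polynomial \eqref{t} in $(r,x)$; this $\Phi$ is the composite of the double cover \eqref{eq: covering map} with the birational equivalence \eqref{bieq}, so it is a genuine morphism carrying $X_0$ onto $D_0$ and $X_1$ onto $D_1$, each by a degree-two map. First I would note that over the affine locus where $x\neq0$, $f_n(r)\neq0$ and $r\neq2$, the double cover $X\to Y$ is unramified and $Y\to D$ is an isomorphism by \autoref{thm:smooth}\ref{r=2}, so $\Phi$ is \'etale there; since $D_0$ and $D_1$ are smooth by \autoref{thm:smooth}\ref{line r=t}, the affine parts of $X_0$ and $X_1$ are smooth at all such points. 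It then remains to examine the finitely many points with $x=0$, $r=2$ or $f_n(r)=0$. The only affine branching of $X\to Y$ is along $x=0$, and there $X_i$ is smooth exactly when the branch locus of $X_i\to Y_i$ is reduced; this is a separability statement for an explicit polynomial — for $X_0$ it is $h(u):=(u+2)f_n^2(u)-1$ — which I would verify by the $u=s+s^{-1}$ substitution of \autoref{fhfacts}, while the points over $r=2$ or $f_n(r)=0$ I would dispatch directly from the model \eqref{X}.

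For the correspondence, I would take an affine intersection point $P=(r_0,t_0)$ of $D_0$ and $D_1$. Since $D_0$ is the line $r=t$ we have $t_0=r_0$, and \autoref{rcoor} gives $G_n(r_0)=0$, whence $f_n(r_0)\neq0$ by \autoref{fhfacts}\ref{hfnoroot}. As $t$ in \eqref{t} depends on $x$ only through $x^2$, solving $t(r_0,x)=r_0$ for the fibre $\Phi^{-1}(P)$ gives $x^2=r_0+2-f_n(r_0)^{-2}=h(r_0)/f_n(r_0)^2$, and the deck involution $(r,x)\mapsto(r,-x)$ of \eqref{eq: covering map} preserves $X$, so both roots $(r_0,\pm x_0)$ lie on $X$. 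Evaluating \autoref{fhfacts}\ref{hpol} at $u=2$ together with \autoref{fhfacts}\ref{fj2} gives $G_n(2)=n\neq0$, so $r_0\neq2$ and the characters $(r_0,\pm x_0)$ are those of irreducible representations, since $\rho_0$ is reducible only when $r=2$. Granting the non-vanishing $x_0\neq0$ established below, the two preimages are distinct and are unramified points of $\Phi$; each then lies on both $X_0$ and $X_1$ (the degree-two fibre of $X_i\to D_i$ over $P$ already fills the two-point set $\Phi^{-1}(P)$), so both are affine intersection points of $X_0$ and $X_1$. Conversely any affine point of $X_0\cap X_1$ maps under $\Phi$ into $D_0\cap D_1$, which yields the asserted two-to-one correspondence.

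The crux, and the step I expect to be the main obstacle, is the non-vanishing $h(r_0)\neq0$ at a root $r_0$ of $G_n$ — equivalently that $G_n$ and $h$ have no common root — which is exactly what makes $x_0\neq0$ and the two preimages distinct. My plan is to combine $f_{2n}(r_0)=-2n$, obtained from $G_n(r_0)=0$ and \autoref{fhfacts}\ref{hpol}, with the splitting $f_{2n}=u f_n^2-2f_nf_{n-1}$ of \autoref{fhfacts}\ref{f2j}. Assuming toward a contradiction that $h(r_0)=0$, i.e. $(r_0+2)f_n(r_0)^2=1$, and writing $r_0=\tau^2+\tau^{-2}$ so that $r_0+2=(\tau+\tau^{-1})^2$, I would combine the two relations and, after cancellation, force $\tau^2+\tau^{-2}=-2$, that is $r_0=-2$; but $h(-2)=-1\neq0$, a contradiction. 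Hence $G_n$ and $h$ share no root, which secures the two-to-one count, while the parallel separability of $h$ (and its analogue for $X_1$) underlies the smoothness along the branch locus.
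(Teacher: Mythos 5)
Your overall architecture is the same as the paper's (push everything down to $D(\Gamma_n)$ via the double cover \eqref{eq: covering map} composed with the birational equivalence \eqref{bieq}, then use smoothness of $D_0$, $D_1$ and \autoref{rcoor}), but the correspondence half of your argument is genuinely different and, where you carry it out, correct --- indeed sharper than the paper in two places. The paper excludes $r_0=2$ indirectly, by computing $F_x(2,x_0)=-2n^2x_0\neq 0$ from the model \eqref{X} (so points with $r=2$ are smooth on $X$ and hence cannot lie on $X_0\cap X_1$); you instead evaluate \autoref{fhfacts}\ref{hpol} at $u=2$ with \autoref{fhfacts}\ref{fj2} to get $G_n(2)=n\neq0$, which is slicker. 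More substantially, the paper's proof never shows within the lemma that the two preimages $(r_0,\pm x_0)$ of an intersection point of $D_0\cap D_1$ are distinct; that $x_0\neq0$ only falls out later, implicitly, from the non-integrality of $x_0^2$ in \autoref{main}. Your direct proof that $G_n$ and $h=(u+2)f_n^2-1$ share no root does work as sketched: from $G_n(r_0)=0$ and \autoref{fhfacts}\ref{hpol} one gets $f_{2n}(r_0)=-2n$, while $h(r_0)=0$ says $f_{2n}(v)=\epsilon=\pm1$ for $v=\tau+\tau^{-1}$, $r_0=\tau^2+\tau^{-2}$; adding and subtracting the resulting relations $\tau^{2n}-\tau^{-2n}=\epsilon(\tau-\tau^{-1})$ and $\tau^{2n}+\tau^{-2n}=-2n\epsilon(\tau+\tau^{-1})$ and multiplying gives $4=(4n^2-1)(\tau^2+\tau^{-2})+2+8n^2$, i.e.\ $r_0=-2$, contradicting $h(-2)=-1$. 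This is an unconditional characteristic-zero argument the paper does not contain.

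The genuine gap is in the smoothness half. To your credit, you flag exactly the point the paper glosses (its ``equivalently'' transfers smoothness from $Y_i$ to $X_i$ without comment on the branch locus $x=0$ of the double cover), but your treatment of the branch points is only a promissory note, and it is not routine. For $X_0$ the reduction to separability of $h$ is correct, but that separability is equivalent, under $u=v^2-2$, to separability of $f_{2n}(v)^2-1$, which is a real claim about critical values of these Chebyshev-type polynomials: the analogous statement at odd index is \emph{false} ($f_3^2-1=u^2(u^2-2)$ has a double root), so any proof ``by the $u=s+s^{-1}$ substitution'' must use the evenness of the index $2n$ in an essential way, and you have not supplied it. Worse, the ``analogue for $X_1$'' is never even formulated: $D_1$ is not rational, $r$ is not a coordinate on $Y_1$, and what is actually needed is that the divisor of $y+2$ on $Y_1$ is reduced at smooth points of $Y_1$ --- not the separability of any one-variable polynomial you have named. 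Finally, the cases you ``dispatch directly from the model \eqref{X}'' are precisely where the paper does its concrete work: if $f_n(r_0)=0$ then \eqref{t} forces $t_0=2$ and the polynomial \eqref{X} evaluates to $-f_n(2)+f_{n-1}(2)=-1\neq0$, so no such affine points exist, and at $r_0=2$ one must actually compute $F_x(2,x_0)=-2n^2x_0\neq0$ (using $x_0^2=(4n^2-1)/n^2\neq0$). These checks are short but cannot be waved off, since your \'etale argument says nothing at those points; as written, the assertion ``the affine parts of $X_0$ and $X_1$ are smooth'' remains unproved in your proposal.
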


\begin{proof}
From \autoref{thm:smooth}, $D_0$ and $D_1$ are smooth and isomorphic to $Y_0$ and $Y_1$ outside of the points with $(r-2)f_n(r)=0$. This implies that all the affine points of $Y_0$ and $Y_1$, and equivalently of $X_0$ and $X_1$ outside of the points with $(r-2)f_n(r)=0$ are smooth. The affine part of $Y(\Gamma)$ does not contain points $(r,y)$ satisfying $f_n(r)=0$. Similarly, the affine part of $X(\Gamma)$ does not contain points $(r,x)$ satisfying $f_n(r)=0$. 

Let $F=f_n(t(r,x))\left( f_n(r)g_n(r)(-x^2+2+r) -1 \right)+f_{n-1}(t(r,x))$ with $t(r,x)=(2-r)(x^2-2-r)f_n^2(r)+2$, the defining polynomial for $X$ as in \autoref{X}.
Suppose $(r_0,x_0)$ is a point in the affine part of $X$ with $r_0=2$. Then $t(r_0,x_0)=2$ and $x_0^2=\frac{4 n^2-1}{n^2}$. In particular, $x_0\neq0$.
Let $F_x=\partial F/\partial x$. Then
\begin{align*}
F_x = & \frac{\partial f_n(t)}{\partial t}\frac{dt}{dx} \left( f_n(r)g_n(r)(-x^2+2+r) -1 \right) \\ & + f_n(t)f_n(r)g_n(r)(-2x) + \frac{\partial f_{n-1}(t)}{\partial t}\frac{dt}{dx}
\end{align*}
with
\begin{equation*}
\frac{dt}{dx}=(2-r)f_n^2(r)(x).
\end{equation*}
Evaluating at $(r_0,x_0)$ we get $\frac{dt}{dx}(r_0,x_0)=0$ and
\begin{align*}
F_x(r_0,x_0) & =  0 + f_n(2)f_n(2)g_n(2)(-2x_0) + 0 \\
& =  (n)(n)(n-n+1)(-2x_0) \text{ by \autoref{fhfacts}\ref{fj2}} \\
& = -2n^2 x_0 \\
& \neq 0.
\end{align*}
Therefore $(r_0,x_0)$ is a smooth point in $X$. In particular, $(r_0,x_0)$ is not an intersection point of $X_0$ and $X_1$.
 
If $(r_1,x_1)$ is a point in the affine intersection of $X_0$ and $X_1$ then $r_1\neq 2$, so it corresponds to an irreducible representation. This point $(r_1,x_1)$ maps to a point in the intersection of $D_0$ and $D_1$ via the map $X(\Gamma_n) \rightarrow Y(\Gamma_n) \rightarrow D(\Gamma_n)$ (the covering map composed with the birational equivalence).
\end{proof}

%%%%%     %%%%%     %%%%%     %%%%%     %%%%%     %%%%%     %%%%%
\section{Proof of the main result}\label{sec:proofs}
\autoref{intro thm} will follow from \autoref{main} and \autoref{0slope}.

\subsection{Surface detection} In this section we show that the intersection points detect essential surfaces.

\begin{theorem}\label{main}
Every intersection point in $X(\Gamma_n)$ detects an essential surface in the complement of $J(2n,2n)$ in $S^3$.
\end{theorem}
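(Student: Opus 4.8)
**

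The plan is to show that every intersection point of $X_0$ and $X_1$ yields a nontrivial action of $\Gamma_n$ on a Bruhat-Tits tree, and then invoke the Culler-Shalen machinery recorded in \autoref{subsec: ani}. There are two kinds of intersection points to handle: the ideal points at infinity and the affine intersection points. For the ideal points, the conclusion is immediate from the classical theory of \cite{cullershalen}: a point where a curve of characters meets the line at infinity in $\C P^2$ always gives rise to a nontrivial action on a tree, hence detects an essential surface. So the real content is the affine case, which is where the structure built up in \autoref{sec:models} comes in.

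\medskip

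For an affine intersection point, the strategy is to produce an algebraic non-integral representation and then apply \autoref{lem: schanuelzhang}. First I would take an affine intersection point of $X_0$ and $X_1$; by \autoref{intptscorr} it corresponds to an irreducible representation and its $r$-coordinate $r_1$ satisfies $r_1 \neq 2$ and, pushing forward to $D(\Gamma_n)$, the root equation $G_n(r_0)=0$ from \autoref{rcoor}. The key algebraic input is the identity \autoref{fhfacts}\ref{hpol}, namely $(u+2)G_n = f_{2n}+2n$, which exhibits the roots of $G_n$ as roots of a monic integer polynomial with constant term controlled by $2n$; this is exactly the mechanism by which the trace coordinate is forced to be non-integral at the prime $2$. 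So I would compute the valuation of the relevant trace at a prime $\mathcal{P}$ lying over $2$ and show $v_\mathcal{P}(tr(\rho\gamma))<0$ for $\gamma$ the meridian (whose trace is essentially $x$, with $x^2-2$ the $y$-coordinate), establishing that the representation is algebraic non-integral.

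\medskip

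Having an algebraic non-integral representation, the final step is to identify the detected slope. By \autoref{lem: schanuelzhang}, if the meridian trace is non-integral while the trace of some other boundary slope $\delta$ remains an algebraic integer, then an essential surface with boundary slope $\delta$ is detected. So I would check the traces of the natural boundary curves — the meridian $a$ and the longitude $(w^n)(w^n)^*$ — at the prime over $2$, and verify that exactly the meridian trace drops in valuation while the longitude trace (or the $0$-slope $L$, using the Seifert-surface generators $s_1,s_2$ from \autoref{eq: seifert gens}) stays integral. The hardest part is this last valuation bookkeeping: one must express the trace of the relevant longitude or Seifert-framing element as a polynomial in $r$ and $x$, reduce modulo $\mathcal{P}$, and confirm integrality, which requires careful use of the recursion for $f_j, g_j$ together with the singularity relations $F(P)=F_r(P)=0$ from the proof of \autoref{rcoor}. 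I expect the nonintegrality of the meridian to come out cleanly from \autoref{fhfacts}\ref{hpol}, but pinning down precisely which slope is detected — and that it is the $0$-slope corresponding to the Seifert surface promised in \autoref{intro thm} — to be the main obstacle, deferred in part to the companion result \autoref{0slope}.
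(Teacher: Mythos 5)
Your overall route is the same as the paper's: ideal points are dispatched by the classical Culler--Shalen theory, and for affine intersection points one uses \autoref{intptscorr} and \autoref{rcoor} to conclude $G_n(r_0)=0$ and then shows the meridian trace fails to be an algebraic integer at a prime over $2$, with the slope identification deferred to \autoref{0slope}. (One small simplification you miss: \autoref{lem: schanuelzhang} is not needed for \autoref{main} at all — by \autoref{subsec: ani}, a single element whose trace has negative valuation already gives a nontrivial action on the Bruhat--Tits tree, hence a detected essential surface; the Schanuel--Zhang lemma only enters when pinning down the slope in \autoref{0slope}.)

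There is, however, a genuine gap in the central step, and your stated mechanism for it is wrong-headed. Since $G_n$ is monic (\autoref{fhfacts}\ref{hpoly}), the identity $(u+2)G_n=f_{2n}+2n$ of \autoref{fhfacts}\ref{hpol} exhibits $r_0$ as an \emph{algebraic integer}; no non-integrality falls out of the root structure or the ``constant term controlled by $2n$'' as you suggest. The actual argument needs two further ingredients you never supply. First, combining \autoref{fhfacts}\ref{hpol} with \autoref{fhfacts}\ref{f2j} gives $(u+2)G_n=uf_n^2-2f_nf_{n-1}+2n$, and reducing modulo a prime $\mathfrak{p}$ over $2$ yields $G_n\equiv f_n^2$ over $\F_\mathfrak{p}$; hence $G_n(r_0)=0$ forces $v(f_n(r_0))>0$, so that $1/f_n^2(r_0)$ is \emph{not} an algebraic integer. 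Second, one needs the bridge from the $r$-coordinate to the meridian trace $x$: on $D_0$ the intersection point satisfies $t_0=r_0$ (\autoref{thm:smooth}.\ref{line r=t}), and substituting this into \autoref{t} (using $r_0\neq 2$, which \autoref{intptscorr} provides) gives the key relation $x_0^2=2+r_0-1/f_n^2(r_0)$, whose right-hand side is non-integral because $r_0$ is integral and $1/f_n^2(r_0)$ is not. Without this substitution your promised ``valuation bookkeeping'' has no path from $G_n(r_0)=0$ to the trace of the meridian — the non-integrality of $x_0$ is not visible from the defining polynomial of $X(\Gamma_n)$ in any direct way, and since $r_0$ itself is integral, it can only be extracted through exactly this relation.
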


\begin{proof}
The work of Culler and Shalen (see \cite[Theorem 2.2.1, Proposition 2.3.1]{cullershalen}) shows that any ideal point in the character variety will give rise to an embedded essential surface in the knot exterior. Thus we need only consider the affine intersection points.

By \autoref{intptscorr}, any affine intersection point $(r_0,x_0)$ of $X_0$ and $X_1$ maps to the point $(r_0,r_0)$ in the intersection of $D_0$ and $D_1$, since $D_0$ is defined by the line $r-t$ (see \autoref{thm:smooth}.\ref{line r=t}). Therefore by \autoref{rcoor}, $G_n(r_0)=0$. Notice that the defining polynomials for $D_0$ and $D_1$ have degree $1$ and $2n-2$ respectively, so by Bezout's Theorem for smooth algebraic curves, they have $2n-2$ distinct intersections points. Since $G_n$ is of degree $2n-2$ (see \autoref{fhfacts}\ref{hpoly}), it must be that the roots of $G_n$ exactly determine the intersection points of $D_0$ and $D_1$. We will now show that the $x_0$ at each intersection point $(r_0,x_0)$ are not algebraic integers. It will then follow that the affine intersection points detect essential surfaces (see \autoref{subsec: ani}).

Let $H=\Q(r_0,\mu_0)$ and $v$ a valuation on $H$ with $v(\pi)=1$ and for some uniformiser $\pi$ over the prime 2 and $\mu_0$ an eigenvalue of $A$. Let $\mathfrak{p}$ be the prime associated with $v$ and $\F_\mathfrak{p}$ its residue field. Then $\F_\mathfrak{p}$ has characteristic 2. Combining \ref{fhfacts}\ref{hpol} and \autoref{fhfacts}\ref{f2j} we get 
\begin{equation*}
(u+2)G_n=uf_{n}^2-2f_nf_{n-1}+2n.
\end{equation*}
The reduction of this equation to $\F_\mathfrak{p}$ shows $G_n=f_{n}^2$ over $\F_\mathfrak{p}$. Evaluating at $r_0$  gives $0=f_{n}^2(r_0)$ in $\F_\mathfrak{p}$. Therefore $v(f_{n}^2(r_0))>0$, so $f_{n}^2(r_0)$ is not a unit and thus $\frac{1}{f_{n}^2(r_0)}$ is not an algebraic integer. Combining (\autoref{t}) with $t_0=r_0$ we get that
\begin{equation}\label{xformula}
x_0^2 = 2+r_0-\frac{1}{f_n^2(r_0)}
\end{equation}
is not an algebraic integer.
\end{proof}

\subsection{Detected slope}\label{sec: det slope}
In this section we determine the slope of the detected surfaces and prove the following theorem.

\begin{theorem}\label{0slope}
The affine intersection points in $X(\Gamma_n)$ detect a Seifert surface.
\end{theorem}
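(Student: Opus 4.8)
The plan is to apply \autoref{lem: schanuelzhang} to a representation $\rho$ whose character $\chi_\rho$ is one of the affine intersection points. By \autoref{main} we already know that for such a point, the trace of the meridian $x_0 = tr(\rho(a))$ fails to be an algebraic integer, and moreover the failure is over a prime $\mathfrak{p}$ dividing $2$. The meridian $a$ represents the $\infty$-slope $(1,0)$, so the hypothesis of \autoref{lem: schanuelzhang} that $\chi_\rho(\gamma)$ is non-integral for some slope $\gamma$ is satisfied with $\gamma = a$. To conclude that the detected slope is the $0$-slope, hence a Seifert surface, I would show that the complementary slope $\delta = L$ (the preferred longitude, representing $(0,1)$) has an integral trace $\chi_\rho(L)$. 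Since a two-bridge knot has $H_1 = \Z$, the $0$-slope is the boundary slope of a Seifert surface, so detecting slope $\delta = L$ is exactly detecting a Seifert surface.

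The key computation is therefore to evaluate $tr(\rho(L))$ at an affine intersection point and verify it is an algebraic integer at $\mathfrak{p}$. Recall from \autoref{subsec:family} that the preferred longitude is $L = (w^n)(w^n)^*$ with $w = (ab^{-1})^n(a^{-1}b)^n$, and from \autoref{eq: seifert gens} that the Seifert surface generators are $s_1 = w^n$ and $s_2 = (ab^{-1})^n$, whose commutator is $L$. First I would express $tr(\rho(L))$ as a polynomial in the coordinate functions $x = t_a$ and $r = t_{ab^{-1}}$ using the standard trace identities in $\SL_2$ (the fundamental relation $tr(UV) + tr(UV^{-1}) = tr(U)tr(V)$ and its consequences), together with the Chebyshev-type polynomials $f_j$ that compute traces of powers $tr(M^j) = f_{j+1}(tr M) - f_{j-1}(tr M)$ for $M \in \SL_2$. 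The aim is to obtain a formula for $tr(\rho(L))$ purely in terms of $x_0$, $r_0$, and the $f_j$ evaluated at $r_0$ and at $t_0 = tr(W_n) = r_0$.

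The main obstacle will be controlling the valuation of this longitude-trace formula at $\mathfrak{p}$. The danger is that $tr(\rho(L))$ might itself inherit a negative valuation from the non-integral quantity $1/f_n^2(r_0)$ that appears in the formula $x_0^2 = 2 + r_0 - 1/f_n^2(r_0)$ of \autoref{xformula}. I would attack this by reducing the entire trace expression modulo $\mathfrak{p}$: working in the residue field $\F_\mathfrak{p}$ of characteristic $2$, one uses the reduction $G_n \equiv f_n^2$ established in the proof of \autoref{main}, which forces $f_n(r_0) \equiv 0$, so that the reduced representation is constrained (for instance, many off-diagonal contributions involving $2 - r$ or $f_n(r)$ degenerate). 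The plan is to show that in this characteristic-$2$ reduction the troublesome denominators cancel in the longitude trace — equivalently, that $L$ lies in a subgroup (the surface subgroup generated by $s_1, s_2$) on which $\rho$ stays integral even though $\rho(a)$ does not.

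An alternative, and possibly cleaner, route avoids an explicit longitude formula entirely. Since $s_1$ and $s_2$ generate the free fundamental group of the Seifert surface and $L = s_1 s_2^{-1} s_1^{-1} s_2$ is their commutator, it suffices to show that the edge stabilizer in the Bass--Serre tree $T_{H,v}$ contains this surface subgroup. Concretely, I would check that $v(tr(\rho(s_i))) \geq 0$ and that $\rho(s_1), \rho(s_2)$ fix a common vertex of $T_{H,v}$ while $\rho(a)$ does not, which both localizes the non-integrality to the meridian direction and identifies the detected incompressible surface as the Seifert surface of \autoref{fig:knotseifert}. Either way the non-integrality must be isolated to the $\infty$-slope while the $0$-slope stays integral; verifying that separation is the crux of the argument, and I expect the characteristic-$2$ reduction of the trace polynomials to be the technical heart of the proof.
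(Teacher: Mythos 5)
Your skeleton matches the paper's proof: non-integral meridian trace (already established in \autoref{main}), integral longitude trace, then \autoref{lem: schanuelzhang} to pin the detected boundary slope to $0$, with the longitude written as the commutator of the Seifert-surface generators $s_1,s_2$. But the two steps you defer are exactly where the content lies, and your proposed mechanisms for both are off. The integrality of $tr(\rho(L))$ is not a characteristic-$2$ phenomenon and cannot be obtained by reducing the trace expression, or the representation, modulo $\mathfrak{p}$: the quantities you would need to reduce --- $1/f_n^2(r_0)$, and the eigenvalue $\mu_0$ of $\rho(a)$ --- have negative valuation and do not lie in the valuation ring (indeed, the non-reducibility of $\rho$ modulo $\mathfrak{p}$ is precisely why the tree action is nontrivial), so ``reducing the entire trace expression'' is not defined until after one exhibits a cancellation, at which point no reduction is needed. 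The actual mechanism is an exact algebraic identity, valid in any characteristic: the paper's \autoref{trprodSs} (proved by a double recursion on $\delta_{d,e}=tr(W_n^d(BA^{-1})^e)$, which is the technical heart your proposal leaves unexecuted) gives
\[
tr(S_1S_2^{-1}) = (2-r_0)(x_0^2-2-r_0)f_{n-1}f_n^3+r_0f_n^2-r_0f_{n-1}f_n-f_{n-1}f_{n+1}+f_{n-1}^2
\]
evaluated at $r_0$, and \autoref{xformula} says $(x_0^2-2-r_0)f_n^2(r_0)=-1$, so the only potentially non-integral factor collapses to an integer; combined with \autoref{trcommutator} and $tr(S_1)=tr(S_2)=r_0f_n(r_0)-2f_{n-1}(r_0)$ (using $t_0=r_0$ on the intersection), integrality of $tr(\rho(L))$ follows outright.

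The second gap is your inference that detecting slope $0$ ``is exactly detecting a Seifert surface.'' Culler--Shalen detection produces \emph{some} essential surface with boundary slope $0$; a priori it could be separating with several boundary circles, and these knots do not even have a unique Seifert surface. Your $H_1=\Z$ remark shows only the converse direction (a Seifert surface has slope $0$). The paper closes this with \autoref{lem: slope 0 is Seifert}, which uses the Hatcher--Thurston classification of essential surfaces in two-bridge knot exteriors to show that every connected essential surface of slope zero for $J(2n,2n)$ is carried by the branched surface $\Sigma[2n,2n]$, is single-sheeted, orientable, and non-separating --- hence a genus-$1$ Seifert surface. Your alternative tree-theoretic route (showing $\rho(s_1),\rho(s_2)$ fix a common vertex of $T_{H,v}$) suffers the same defect: placing the surface subgroup in a vertex stabilizer neither makes it an edge stabilizer nor identifies the detected surface with the Seifert surface, so it cannot replace either \autoref{lem: schanuelzhang} or \autoref{lem: slope 0 is Seifert}.
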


The following trace identities for $M_1,M_2\in\SL_2\C$ follow from Cayley-Hamilton:
\begin{equation}\label{trinv}
tr(M_1)=tr(M_1^{-1}) 
\end{equation}
\begin{equation}\label{trorder}
tr(M_1 M_2)=tr(M_2 M_1)
\end{equation}
\begin{align}\label{matrixtrace}
tr (M_1 M_2) & = (tr M_1) (tr M_2) - tr(M_1^{-1}M_2) \\
& = (tr M_1) (tr M_2) - tr(M_1 M_2^{-1}).  \notag
\end{align}

The following identities follows from the previous identities by induction:
\begin{align}\label{trpow}
tr(M_1^k) & = tr(M_1)f_k(tr(M_1)) - 2 f_{k-1}(tr(M_1)) \\
& = f_{k+1}(tr(M_1)) - f_{k-1}(tr(M_1))  \notag 
\end{align}
\begin{align}\label{trcommutator}
tr[M_1,M_2] & = tr(M_1 M_2 M_1^{-1} M_2^{-1}) \\
& = tr^2(M_1) + tr^2(M_2) + tr^2(M_1 M_2) - tr(M_1) tr(M_2) tr(M_1 M_2)-2. \notag
\end{align}

\begin{lemma} \label{trprodSs}
Let $S_1$ and $S_2$ be the images of $s_1$ and $s_2$ at a representation corresponding to a point $(r,x)$ in $X(\Gamma_n)$. The trace of $S_1 S_2^{-1}$ is given by
\begin{equation}
f_n(r)\left( f_n(t)\delta_{1,1}-r f_{n-1}(t)\right) - f_{n-1}(r)\left( f_{n+1}(t)- f_{n-1}(t)\right)
\end{equation}
with $$t=(2-r)(x^2-2-r)f_{n}^2(r)+2,$$
and $$\delta_{1,1}=tr(W_n B A^{-1})=(2-r) f_{n-1}(r)f_n(r)(x^2-2-r)+r . $$
\end{lemma}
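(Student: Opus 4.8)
The plan is to reduce the whole computation to powers of the two matrices $U := AB^{-1}$ and $V := A^{-1}B$. Writing $A=\rho(a)$, $B=\rho(b)$ for the representation attached to $(r,x)$, we have $tr(U)=r$ by definition of the coordinate $r$, while $V = A^{-1}U^{-1}A$ is conjugate to $U^{-1}$, so $tr(V)=tr(U)=r$ by \eqref{trinv} and \eqref{trorder}. By \eqref{eq: seifert gens}, $S_1=\rho(w^n)=W_n^n$ and $S_2=\rho((ab^{-1})^n)=U^n$, where $W_n=U^nV^n$ and $tr(W_n)=t$ by \eqref{t}. The crucial observation is that $BA^{-1}=(AB^{-1})^{-1}=U^{-1}$, so that $S_1S_2^{-1}=W_n^n U^{-n}$ and $\delta_{1,1}=tr(W_n BA^{-1})=tr(W_n U^{-1})$; both target quantities are thus words in $U$ and $V$ alone.

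First I would record the matrix-level form of \eqref{trpow}: for $M\in\SL_2\C$,
\begin{equation*}
M^k = f_k(tr M)\,M - f_{k-1}(tr M)\,I,
\end{equation*}
which follows from $M^2=(tr M)M - I$ by the same induction. Applying this to $W_n^n$ (trace $t$) and to $U^{-n}$ (trace $r$, since $tr(U^{-1})=tr(U)=r$), I would expand the product $W_n^n U^{-n}$ into four terms and take traces. Using $tr(W_n)=t$, $tr(U^{-1})=r$, $tr(I)=2$, and $tr(W_n U^{-1})=\delta_{1,1}$, this yields
\begin{equation*}
tr(S_1S_2^{-1}) = f_n(t)f_n(r)\delta_{1,1} - t\,f_n(t)f_{n-1}(r) - r\,f_{n-1}(t)f_n(r) + 2f_{n-1}(t)f_{n-1}(r).
\end{equation*}
The stated expression follows after collecting the $f_n(r)$ terms and rewriting $t f_n(t) - 2f_{n-1}(t) = f_{n+1}(t) - f_{n-1}(t)$ via the recursion $f_{n+1}=tf_n-f_{n-1}$.

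It then remains to verify the closed form for $\delta_{1,1}=tr(W_n U^{-1})=tr(U^nV^nU^{-1})$. By cyclic invariance \eqref{trorder} this equals $tr(U^{n-1}V^n)$, so I would apply the power identity again to $U^{n-1}$ and $V^n$, expand, and take traces. This reduces everything to $tr(U)=tr(V)=r$ and $tr(UV)=tr(AB^{-1}A^{-1}B)=tr[A,B^{-1}]$, which \eqref{trcommutator} evaluates to $(2-r)(x^2-2-r)+2$. Substituting and simplifying—using the recursion together with the determinant identity $f_{n-1}^2-f_{n-2}f_n=1$ (the index shift of $f_n^2-f_{n-1}f_{n+1}=1$ established in the proof of \autoref{rcoor}) to collapse the lower-order terms to the constant $r$—produces $\delta_{1,1}=(2-r)f_{n-1}(r)f_n(r)(x^2-2-r)+r$.

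The computation is conceptually routine once the reduction to $U$, $V$, and $W_n=U^nV^n$ is in place; the main bookkeeping hurdle is the nested use of the power identity (two layers for $\delta_{1,1}$) and the repeated appeals to the recursion and the determinant identity to recognize that the accumulated lower-order terms telescope to the clean forms stated. The genuinely nontrivial step is spotting that $BA^{-1}=U^{-1}$, which is exactly what makes both $S_1S_2^{-1}$ and $W_nBA^{-1}$ reducible to words in $U$ and $V$.
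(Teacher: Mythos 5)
Your proof is correct, and it takes a genuinely different route from the paper's. The paper's proof sets up $\delta_{d,e}=tr(W_n^d(BA^{-1})^e)$ together with the candidate expression $\gamma_{d,e}$, verifies the base cases $\delta_{d,0}$, $\delta_{0,e}$, $\delta_{1,1}$, and then shows that both families satisfy the same linear recursions $\delta_{d,1}=t\,\delta_{d-1,1}-\delta_{d-2,1}$ and $\delta_{d,e}=r\,\delta_{d,e-1}-\delta_{d,e-2}$, concluding by a two-variable induction; crucially, it does not compute $\delta_{1,1}$ from scratch, but rewrites $tr(W_nBA^{-1})=tr(W_{n-1}A^{-1}B)$ and imports the closed form from \cite[Lemma 3.6]{smooth}. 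You instead use the matrix-level Cayley--Hamilton identity $M^k=f_k(tr\,M)\,M-f_{k-1}(tr\,M)\,I$ to expand $tr(W_n^nU^{-n})$ directly into the four-term expression $f_n(t)f_n(r)\delta_{1,1}-t f_n(t)f_{n-1}(r)-r f_{n-1}(t)f_n(r)+2f_{n-1}(t)f_{n-1}(r)$, which indeed regroups to the stated formula via $t f_n(t)-2f_{n-1}(t)=f_{n+1}(t)-f_{n-1}(t)$; and you then rederive $\delta_{1,1}=tr(U^{n-1}V^n)$ by the same device, reducing everything to $tr(UV)=tr[A,B^{-1}]=2x^2+r^2-rx^2-2=(2-r)(x^2-2-r)+2$ via \eqref{trcommutator}. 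I checked the collapse of the lower-order terms you invoke: $2f_{n-1}f_n-rf_{n-1}^2-rf_{n-2}f_n+2f_{n-2}f_{n-1}=r\bigl(f_{n-1}^2-rf_{n-1}f_{n-2}+f_{n-2}^2\bigr)=r\bigl(f_{n-1}^2-f_nf_{n-2}\bigr)=r$, using $rf_{n-1}=f_n+f_{n-2}$ and exactly the shifted determinant identity $f_{n-1}^2-f_{n-2}f_n=1$ that you cite, so the verification goes through as claimed. Your route buys self-containedness (no appeal to the external Lemma 3.6 of \cite{smooth}, and $\delta_{1,1}$ falls out of the basic trace identities already recorded in \autoref{sec: det slope}) and replaces the recursion bookkeeping with a transparent one-shot expansion; the paper's route buys the more general identity $\delta_{d,e}=\gamma_{d,e}$ for all $d,e\geq 0$ and avoids ever computing $tr(UV)$ explicitly. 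Both establish the lemma as stated.
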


\begin{proof}
Recall that $tr(S_1)=tr(W_n^n)$, where $tr(W_n)=t$ and $tr(S_2^{-1})=tr((BA^{-1})^n)$, where $tr(BA^{-1})=tr(AB^{-1})=r$.

Set $\delta_{d,e}=tr(W_n^d (BA^{-1})^e)$ and 
\begin{equation*}
\gamma_{d,e}= f_e(r)\left( f_d(t)\delta_{1,1}-r f_{d-1}(t)\right) - f_{e-1}(r)\left( f_{d+1}(t)- f_{d-1}(t)\right).
\end{equation*}
The statement of the lemma is equivalent to $\delta_{d,e}=\gamma_{d,e}$ in the case $d=n$ and $e=n$. 

We have
\begin{align*}
\delta_{d,0} & = tr(W_n^d) \\
& = tr(W_n) f_d(tr(W_n))-2f_{d-1}(tr(W_n)) \text{ by \autoref{trpow} } \\
& = t f_d(t)-2 f_{d-1}(t) \\
& = f_{d+1}(t)- f_{d-1}(t) \\
& = \gamma_{d,0}
\end{align*}
and
\begin{align*}
\delta_{0,e} & = tr((BA^{-1})^e) \\
& = tr(BA^{-1})f_e(tr(BA^{-1}))-2f_{e-1}(tr(BA^{-1})) \text{ by \autoref{trpow} } \\
& = tr(AB^{-1})f_e(tr(AB^{-1}))-2f_{e-1}(tr(AB^{-1})) \text{ by \autoref{trinv} } \\
& = r f_e(r)-2 f_{e-1}(r) \\
& = \gamma_{0,e}.
\end{align*}

Clearly $\gamma_{1,1}=\delta_{1,1}$, which is given by
\begin{align*}
\delta_{1,1} & = tr(W_n B A^{-1}) \\
& = tr(B A^{-1} W_n) \text{ by \autoref{trorder} } \\
& = tr(B A^{-1} (AB^{-1})^n (A^{-1}B)^n) \\
& = tr((AB^{-1})^{n-1} (A^{-1}B)^{n-1}A^{-1}B) \\
& = tr(W_{n-1}A^{-1}B) \\
& = (2-r) f_{n-1}(r)f_n(r)(x^2-2-r)+r \text{ by \cite[Lemma 3.6]{smooth}}.
\end{align*}

Notice that $\delta_{d,1}$ satisfies the recursion
\begin{align*}
\delta_{d,1} & = tr(W_n^d B A^{-1}) \\
& = tr(W_n) tr(W_n^{d-1} B A^{-1})- tr(W_n^{d-2} B A^{-1})\\
& = t \delta_{d-1,1} -\delta_{d-2,1}
\end{align*}
as does 
\begin{align*}
\gamma_{d,1} & = f_d(t)\delta_{1,1}-r f_{d-1}(t) \\
& = t f_{d-1}(t)\delta_{1,1}-f_{d-2}(t)\delta_{1,1}-r t f_{d-2}(t)+ r f_{d-3}(t) \\
& = t \left( f_{d-1}(t)\delta_{1,1} -r f_{d-2}(t)\right)- \left( f_{d-2}(t)\delta_{1,1}+ r f_{d-3}(t)\right) \\
& = t \gamma_{1,1} -\gamma_{2,1}.
\end{align*}

Also notice that $\delta_{d,e}$ satisfies the recursion
\begin{align*}
\delta_{d,e} & = tr(W_n^d (B A^{-1})^e) \\
& = tr(W_n^d (B A^{-1})^{e-1}B A^{-1})\\
& = tr(W_n^d (B A^{-1})^{e-1})tr(B A^{-1})-tr(W_n^d (B A^{-1})^{e-2}) \\
& = r \delta_{d,e-1}-\delta_{d,e-2}.
\end{align*}
as does 
\begin{align*}
\gamma_{d,e} & = f_e(r)\left( f_d(t)\delta_{1,1}-r f_{d-1}(t)\right) - f_{e-1}(r)\left( f_{d+1}(t)- f_{d-1}(t)\right)\\
& = (r f_{e-1}(r) - f_{e-2}(r))\left( f_d(t)\delta_{1,1}-r f_{d-1}(t)\right) \\ 
& \hspace{72pt} - (r f_{e-2}(r)-f_{e-3}(r))\left( f_{d+1}(t)- f_{d-1}(t)\right) \\
& = r \gamma_{d,e-1} -\gamma_{d,e-2}.
\end{align*}
Since the equivalence is satisfied for $\delta_{0,0}$, $\delta_{1,0}$, $\delta_{0,1}$, $\delta_{1,1}$, this completes the proof. 
\end{proof}

\begin{lemma}\label{lem: slope 0 is Seifert}
If $S$ is a connected essential surface in the exterior of $J(2n,2n)$ with slope zero, then $S$ is a genus 1 Seifert surface. 
\end{lemma}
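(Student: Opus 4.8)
The plan is to combine the smallness of two-bridge knot exteriors with the classification of incompressible surfaces due to Hatcher and Thurston \cite{hatcherthurston}, together with the fact that $J(2n,2n)$ has Seifert genus $1$. First I would record the easy structural reductions. Since $S$ has slope zero, every component of $\partial S$ is a longitude of $J(2n,2n)$. Because two-bridge knot exteriors are small and hence contain no closed essential surfaces (see \cite{hatcherthurston}), the surface $S$ must have non-empty boundary, so $\partial S$ is a non-empty collection of parallel longitudes. Orienting $S$, its boundary is null-homologous in $V$, so $[\partial S]$ lies in the kernel of $H_1(\partial V)\to H_1(V)$, which is generated by the longitude; under $\partial\colon H_2(V,\partial V)\to H_1(\partial V)$ this means $[S]=c\,[F]$ for some integer $c$, where $F$ is a genus-$1$ Seifert surface and $[F]$ generates $H_2(V,\partial V)\cong\Z$. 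It therefore remains to show that $\partial S$ is a single curve and that the genus of $S$ equals $1$.

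Next I would use the genus. The elements $s_1,s_2$ of \autoref{eq: seifert gens} exhibit a genus-$1$ Seifert surface, and since $J(2n,2n)$ is non-trivial its Seifert genus is exactly $1$; equivalently the Thurston norm of the generator $[F]$ equals $2\cdot 1-1=1$, so any connected Seifert surface has genus at least $1$. To pin the genus and the number of boundary components from above, I would appeal to the Hatcher--Thurston classification \cite{hatcherthurston}: the incompressible, boundary-incompressible surfaces in the complement of the two-bridge knot with normal form $(4n^2-1,2n)$ form an explicit finite list, each member being connected with a well-defined boundary slope. Reading off this list, the members with slope $0$ are precisely the genus-$1$ Seifert surfaces (the two non-isotopic ones noted after \autoref{eq: seifert gens}); in particular each has a single longitudinal boundary curve and genus $1$. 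Matching $S$ against this finite list then forces $c=\pm1$, a single boundary component, and genus $1$, which completes the argument.

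The main obstacle is exactly the genus (and boundary-number) upper bound: incompressibility alone does not bound the genus of a slope-zero surface, since a knot may in principle carry incompressible Seifert surfaces of non-minimal genus, and a connected essential surface could a priori have several longitudinal boundary components. This is the point at which the two-bridge structure is indispensable, and where I would rely on the Hatcher--Thurston enumeration rather than a general $3$-manifold argument. As an alternative to quoting the full classification, one could cap $\partial S$ off by meridian disks in the $0$-surgery $V(0)$ and invoke Gabai's theorem that a minimal-genus Seifert surface remains Thurston-norm minimizing after $0$-surgery, so that the capped generator has norm $2g(K)-2=0$ and is represented by tori; the difficulty of upgrading ``incompressible'' to ``norm minimizing'' for the capped copy of $S$ is the same obstacle seen from the surgered manifold.
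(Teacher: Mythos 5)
Your overall strategy is the one the paper itself uses---reduce to the Hatcher--Thurston classification of essential surfaces in two-bridge knot complements---but your proposal stops exactly where the paper's proof begins. The step ``reading off this list, the members with slope $0$ are precisely the genus-$1$ Seifert surfaces'' is not a lookup: for the family $K_{2n/(4n^2-1)}$ one must actually produce the continued fraction expansions realizing minimal edge paths and compute their boundary slopes. The paper does this: the expansion of $2n/(4n^2-1)$ of the required form is $[2n-1,-1,2n-1]$, there are exactly four minimal edge paths, with expansions $[-2,\dots,-2,-3,-2,\dots,-2]$, $[-2,\dots,-2,2n-1]$, $[2n-1,-2,\dots,-2]$ and $[2n,2n]$, and by \cite[Proposition 2]{hatcherthurston} the corresponding slopes are $2-8n$, $-4n$, $-4n$ and $0$. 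Without this enumeration you have not shown that slope zero arises only from the branched surface $\Sigma[2n,2n]$, and that is the substantive content of the lemma.

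Second, your description of the classification is inaccurate in a way that matters for your own stated obstacle. The Hatcher--Thurston list is not a finite list of connected surfaces: each branched surface $\Sigma[b_1,\dots,b_k]$ carries surfaces with $n\geq 1$ sheets, which in general need be neither connected nor orientable, so a priori a connected slope-zero essential surface could be carried with several sheets and have several longitudinal boundary circles---exactly the possibility you flag as the main difficulty. The paper closes this using \cite[Proposition 1(1)]{hatcherthurston} and the remark directly following it: since both entries of $[2n,2n]$ are even, a connected carried surface is single-sheeted and orientable, hence isotopic to $S_1(0)$ or $S_1(1)$, and these are visibly non-separating, hence Seifert surfaces. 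The genus bound then comes not from matching against a list but from the Corollary to Proposition 1 of \cite{hatcherthurston} (all essential Seifert surfaces of a two-bridge knot have the same genus) together with the explicit genus-$1$ surface of \autoref{subsec:family}. Your homology computation $[S]=c\,[F]$ and the Thurston-norm remarks are correct but, as you acknowledge, do not by themselves bound the genus or the number of boundary components, and your Gabai alternative stalls at the very step you name. In short: right tool and a correct account of what must be checked, but the checking---the edge-path enumeration, the slope computation via \cite[Proposition 2]{hatcherthurston}, and the single-sheeted, orientable, non-separating analysis of $\Sigma[2n,2n]$---is the proof, and it is missing.
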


\begin{proof}
Recall from \autoref{subsec:family} that the knots $J(2n,2n)$ have two-bridge normal form $(4n^2-1,2n)$.
Using the language of \cite{hatcherthurston}, the unique continued fraction expansion for the knot $J(2n,2n)=K_{2n/(4n^2-1)}$ of the form $[a_1,-a_2,a_3,\dots,\pm a_k]$ as in \cite[Figure 5]{hatcherthurston} and the proceeding paragraph is given by $[2n-1,-1,2n-1]$. By \cite[Theorem 1(c)]{hatcherthurston} and the remarks on \cite[page 229]{hatcherthurston} and the top of \cite[page 230]{hatcherthurston}, any essential surface is carried by a branched surface corresponding to a minimal edge path involving only the heavy lines in \cite[Figure 5]{hatcherthurston}. Following the remarks at the end of \cite[page 229]{hatcherthurston}, there are four minimal edge paths. These correspond to the following continued fraction expansions:
\begin{equation*}
[\underbrace{-2,\dots,-2}_{2n-2},-3,\underbrace{-2,\dots,-2}_{2n-2}],
[\underbrace{-2,\dots,-2}_{2n-1},2n-1],
[2n-1,\underbrace{-2,\dots,-2}_{2n-1}],
[2n,2n].
\end{equation*}
By \cite[Proposition 2]{hatcherthurston}, the branched surfaces associated to these continued fraction expansions will carry essential surfaces of slopes determined solely by the continued fraction expansion. The corresponding slopes are $2-8n$, $-4n$, $-4n$ and $0$ respectively.

Any connected surface of slope zero is therefore carried by the branched surface $\Sigma[2n,2n]$. By \cite[Proposition 1(1)]{hatcherthurston} and the remark directly following it, this surface is a single-sheeted orientable surface. Such an essential connected surface of slope zero is then isotopic to either $S_1(0)$ or $S_1(1)$ as constructed in \cite[page 227]{hatcherthurston}. It is easy to see from the construction, that $S_1(0)$ and $S_1(1)$ are non-separating surfaces. Therefore, a connected essential surface of slope zero in the exterior of $J(2n,2n)$ is a Seifert surface.

By \cite[Corollary to Proposition 1]{hatcherthurston}, all essential Seifert surfaces for a two-bridge knot have the same genus. Since the Seifert surface described in \autoref{subsec:family} has genus 1, all essential Seifert surfaces for $J(2n,2n)$ also have genus 1.
\end{proof}

We can now prove \autoref{0slope}.

\begin{proof}[Proof of \autoref{0slope}]
Suppose that $(r_0,x_0)$ is a point in the affine intersection of $X_0$ and $X_1$ and recall that $t_0=r_0$ (see \autoref{thm:smooth}.\ref{line r=t}). Let $S_1$ and $S_2$ be the images of $s_1$ and $s_2$ of a representation corresponding to $(r_0,x_0)$. Recall from \autoref{subsec:family} that the preferred longitude, is given by $(W_n)^n(W_n^*)^n=S_1S_2^{-1}S_1^{-1}S_2$.

By \autoref{trcommutator}, $S_1S_2^{-1}S_1^{-1}S_2$ has trace
\begin{align*}
tr(S_1S_2^{-1}S_1^{-1}S_2) & = tr^2(S_1) + tr^2(S_2) + tr^2(S_1 S_2^{-1}) - tr(S_1) tr(S_2) tr(S_1 S_2^{-1})-2\\
& = t_0^2 + r_0^2 + tr^2(S_1 S_2^{-1}) - t_0 r_0 tr(S_1 S_2^{-1})-2\\
& = 2 r_0^2 + (1-r_0^2)tr^2(S_1 S_2^{-1})-2.
\end{align*}

Since $tr(W_n)=t_0=r_0$, $S_1=W_n^n$ has trace $r_0 f_n(r_0) - 2 f_{n-1}(r_0)$. Since $tr(AB^{-1})=r_0$, also $S_2=(AB^{-1})^n$ has trace $r_0 f_n(r_0) - 2 f_{n-1}(r_0)$. From \autoref{trprodSs}, $S_1 S_2^{-1}$ has trace
$$ (2-r_0)(x_0^2-2-r_0)f_{n-1}f_n^3+r_0f_n^2-r_0 f_{n-1}f_n- f_{n-1}f_{n+1}+ f_{n-1}^2 $$
evaluated at $r_0$.
Since $r_0$ is an algebraic integer (see \autoref{rcoor}, \autoref{fhfacts}.\ref{hpoly}), it suffices to show that $x_0^2f_n^2(r_0)$ is an algebraic integer, guaranteeing integrality of $tr(S_1 S_2^{-1})$. Applying \autoref{xformula} we get
\begin{align*}
x_0^2f_n^2(r_0) & = \left(2+r_0-\frac{1}{f_n^2(r_0)}\right) f_n^2(r_0) \\
& =  (2+r_0)f_n^2(r_0)-1.
\end{align*}
which is an algebraic integer. The theorem now follows from \autoref{lem: schanuelzhang} and \autoref{lem: slope 0 is Seifert}.
\end{proof}

\begin{remark}
There are finitely many characters of reducible representations in $X(\Gamma_n)$. These are contained in $X_0$ and also detect the slope zero. To see this, let $(r_0,x_0)\in X$ correspond to a reducible representation $\rho$. Then $r_0=2$. Substituting $r_0=2$ at \autoref{t} we get $t_0=2$ and at \autoref{X} we get
\begin{equation}\label{xformula2}
x_0^2 =\frac{4 n^2-1}{n^2}
\end{equation}
which is not an algebraic integer.  We may conjugate $\rho$ so that $\rho(\Gamma)$ is generated by
\begin{equation*}
\begin{pmatrix} \mu & 1 \\ 0 & \mu^{-1} \end{pmatrix}
 \hspace{8pt}\text{and}\hspace{8pt}
\begin{pmatrix} \mu & 0 \\ 0 & \mu^{-1} \end{pmatrix}.
\end{equation*}
Then the character $\rho$ is the same as a character of a diagonal representation, which is abelian. Therefore the traces of the images of $s_1$, $s_2$ and $s_1 s_2^{-1}$ are all the same as the trace of the image of the identity, which is the integer 2.
\end{remark}

%%%%%     %%%%%     %%%%%     %%%%%     %%%%%     %%%%%     %%%%%
\section{Two examples}\label{sec:examples}
We consider in detail the first two knots in the family $J(2n,2n)$, namely $7_4$ ($n=2$) and $11a_{363}$ ($n=3$).

\subsection{The First Knot}
The first knot in the family $J(2n,2n)$ is the knot $7_4$ of two-bridge normal form $(15,11)$ with knot group
\begin{equation*}
\Gamma_2=\langle a,b : a w^2 = w^2 b \rangle
\end{equation*}
where $w=ab^{-1}ab^{-1}a^{-1}ba^{-1}b$.
The variety $X(\Gamma_2)$ is defined by the polynomial
\begin{equation*}
(-1 + 2 r^2 + r^3 - r^2 x^2) (1 + 4 r - 4 r^2 - r^3 + r^4 - 2 r x^2 + 
   3 r^2 x^2 - r^3 x^2)
\end{equation*}
where the first factor defines the canonical component $X_0$ and the second factor defines the component $X_1$. These two curves intersect at 20 points counting multiplicities. However, 16 of these correspond to 2 ideal points (each with multiplicity 8). The affine intersections points $(r,x)$ are
\begin{align*}
\left(1-i,\sqrt{3-\frac{3i}{2}}\right),&
\left(1-i,-\sqrt{3-\frac{3i}{2}}\right),\\
\left(1+i,\sqrt{3+\frac{3i}{2}}\right),&
\left(1+i,-\sqrt{3+\frac{3i}{2}}\right)
\end{align*}
each with multiplicity 1.
The $x$-coordinates of these points are the 4 roots of the polynomial $4x^4-24x^2+45$. These algebraic non-integral numbers determine the traces of the meridian.

Consider the representation $\rho:\Gamma_2\rightarrow \SL_2\C$ given by
\begin{equation*}
 \rho(a)= \begin{pmatrix} \mu & 1 \\ 0 & \mu^{-1} \end{pmatrix}
 \hspace{8pt}\text{and}\hspace{8pt}
 \rho(b)= \begin{pmatrix} \mu & 0 \\ 1+i & \mu^{-1} \end{pmatrix}
\end{equation*}
corresponding to the point $\left(1-i,\sqrt{3-\frac{3i}{2}}\right)$,
with $\mu=\frac{1}{2} \left(\sqrt{-1 - \frac{3i}{2}} + \sqrt{3 - \frac{3i}{2}}\right)$. The image of the longitude is the matrix
\begin{equation*}
\begin{pmatrix} 7+12i+2\sqrt{-24+42i} & -8\sqrt{-3-6i} \\ 0 & 7+12i-2\sqrt{-24 + 42i} \end{pmatrix}
\end{equation*}
with trace $14+24i$, an algebraic integer.

\begin{remark} It is easy to see that for the representations given by these affine intersection points, the restriction of the peripheral subgroup is faithful. The meridian and longitude are mapped to loxodromics with the same axis. However, since one has non-integral trace and the other integral trace, these generate a non-discrete $\Z^2$. This leads to ask the following question. Could these representations be faithful? A positive answer would imply that the non-canonical component contains faithful representations, and in particular do not come from a quotient. 
\end{remark}

\subsection{The Second Knot}
The second knot in the family $J(2n,2n)$ is the knot $11a_{363}$ of two-bridge normal form $(35,29)$ with knot group
\begin{equation*}
\Gamma_2=\langle a,b : a w^3 = w^3 b \rangle
\end{equation*}
where $w=ab^{-1}ab^{-1}ab^{-1}a^{-1}ba^{-1}ba^{-1}b$.
The variety $X(\Gamma_3)$ is defined by the polynomial
$$ (1+r-4r^2-2r^3+2r^4+r^5-x^2+2r^2x^2-r^4x^2)(1 + 8 r - 40 r^2 - 46 r^3 + 110 r^4$$
$$+ 71 r^5 - 113 r^6 - 43r^7+54r^8+11r^9-12r^{10}-r^{11}+r^{12}-8x^2 -8rx^2+ 60r^2x^2 $$
$$+ 21r^3x^2 - 130r^4x^2- 7r^5x^2+118 r^6 x^2 - 16 r^7 x^2 - 46 r^8 x^2 + 12 r^9 x^2 + 6 r^{10} x^2 $$
$$-2 r^{11} x^2 + 4 x^4 - 19 r^2 x^4 + 5 r^3 x^4 + 32 r^4 x^4 
- 15 r^5 x^4 - 22 r^6 x^4
+ 15 r^7 x^4 + 4 r^8 x^4 $$
$$-5r^9x^4+r^{10}x^4)$$
where the first factor defines the canonical component $X_0$ and the second factor defines the component $X_1$. These two curves intersect at 84 points counting multiplicities. However, 76 of these correspond to 2 ideal points (with multiplicities 24 and 52). There is 8 affine intersections points $(r,x)$ each with multiplicity 1. The $r$-coordinates are the four roots of the polynomial $r^4-2r^3+3$. The $x$-coordinates are the eight roots of the polynomial $144x^8-1424x^6+5160x^4 -8400x^2+6125$. These algebraic non-integral numbers determine the traces of the meridian.

Consider the representation $\rho:\Gamma_3\rightarrow \SL_2\C$ given by
\begin{equation*}
 \rho(a)= \begin{pmatrix} \mu & 1 \\ 0 & \mu^{-1} \end{pmatrix}
 \hspace{8pt}\text{and}\hspace{8pt}
 \rho(b)= \begin{pmatrix} \mu & 0 \\ s & \mu^{-1} \end{pmatrix}
\end{equation*}
corresponding to one the intersection points, with $\mu\approx 0.44228 + 0.601587i$ (an algebraic number of degree 8 over $\Q$) and $s\approx 2.60504 + 0.835079 i$ (an algebraic integral of degree 4 over $\Q$).
The image of the longitude has trace a root of the polynomial
\begin{equation*}
\ell^4-212\ell^3+15768\ell^2-385360\ell+ 8647328
\end{equation*}
($\approx 95.247 + 42.4755i$), an algebraic integer.

%%%%%     %%%%%     %%%%%     %%%%%     %%%%%     %%%%%     %%%%%
\section{Final Remarks}\label{sec:final}

\subsection{Multiple Components}\label{subsec: final mult components}
In \cite{riley}, Riley describes 3 cases in which a non-canonical component of characters of irreducible representations can arise in the character variety of two-bridge knots. One way we get a non-canonical component is if there exists an epimorphism from the knot group onto another knot group. However, this is not the case for the knots $J(2n,2n)$. 

\begin{claim}\label{claim no epi}
There is no epimorphism from $\Gamma_n$ onto another knot group.
\end{claim}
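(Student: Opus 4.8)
The plan is to reduce to a meridian-preserving epimorphism and then obstruct it homologically using the Alexander polynomial, supplemented by a genus and bridge-number argument to control the target. First I would record that both $\Gamma_n$ and a hypothetical target knot group $G(K')$ have abelianization $\Z$ generated by a meridian, so any epimorphism $\phi\colon\Gamma_n\to G(K')$ carries the meridian $a$ to a generator of $H_1(K')$; after composing with an inversion if necessary, one may arrange that $\phi$ sends the meridian of $J(2n,2n)$ to the meridian of $K'$. Such a meridian-preserving epimorphism of knot groups is induced by a proper degree-one map of the exteriors, and hence does not increase Seifert genus. Since $J(2n,2n)$ has genus $1$ (see \autoref{subsec:family}) and $K'$ is nontrivial, this forces $g(K')=1$.

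Next I would invoke the classical divisibility $\Delta_{K'}(t)\mid\Delta_{J(2n,2n)}(t)$ for meridian-preserving epimorphisms. A direct computation from the genus-$1$ Seifert surface of \autoref{subsec:family} gives
\[
\Delta_{J(2n,2n)}(t)=n^2t^2-(2n^2-1)t+n^2,
\]
which is primitive and has discriminant $1-4n^2<0$, hence is irreducible over $\Z$ by Gauss's lemma. Therefore $\Delta_{K'}$ is, up to units, either $1$ or $\Delta_{J(2n,2n)}$. In the case $\Delta_{K'}=\Delta_{J(2n,2n)}$ I would argue that no \emph{proper} such epimorphism exists when the target is two-bridge: the classification of meridian-preserving epimorphisms between two-bridge knot groups due to Ohtsuki--Riley--Sakuma shows that the slope of $J(2n,2n)$, whose continued fraction $[2n,2n]$ has length two, admits no nontrivial decomposition as an ORS-concatenation, so $J(2n,2n)$ can only occur as a target and never as a proper source. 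Any epimorphism onto a two-bridge knot group preserving the same Alexander polynomial is then an isomorphism, yielding $K'=J(2n,2n)$ rather than \emph{another} knot.

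The remaining obstacle, and the hardest part, is to exclude targets that are not two-bridge, in particular nontrivial genus-$1$ knots with \emph{trivial} Alexander polynomial, which the homological argument alone cannot detect. Here I would exploit that $\Gamma_n$ is generated by two meridians, so $G(K')=\phi(\Gamma_n)$ has meridional rank at most two; by the cases of the Meridional Rank Conjecture already established for the relevant classes, meridional rank two forces bridge number two, returning $K'$ to the two-bridge setting handled above. I expect the delicate point to be confirming that the target necessarily lands among knots for which the Meridional Rank Conjecture is known, rather than invoking it in full generality. An alternative, which I would pursue in parallel, is to show directly via the ORS tree picture that the image of a two-bridge group under a meridian-preserving epimorphism is itself two-bridge, thereby bypassing the conjecture entirely.
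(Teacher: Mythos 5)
Your proposal has a genuine gap at its very first step, and this gap propagates through everything that follows. An epimorphism $\phi\colon\Gamma_n\to G(K')$ sends the meridian $a$ to an element whose class \emph{generates} $H_1(K')\cong\Z$, but such an element need not be conjugate to a meridian of $K'$, and no composition with an inversion can fix this: the obstruction is a conjugacy-class issue, not one of orientation. So the reduction to a meridian-preserving epimorphism is unjustified, and with it fall the three steps that depend on it: (i) the degree-one map claim is false even for genuinely meridian-preserving epimorphisms --- epimorphisms of knot groups need not be geometric, and even the Ohtsuki--Riley--Sakuma epimorphisms are induced by maps of various degrees, including zero, so no genus bound follows; (ii) the meridional rank argument fails because $\phi(a),\phi(b)$ are only conjugate normal generators of $G(K')$, not known to be meridians, so ``meridional rank at most two'' does not follow (and invoking the Meridional Rank Conjecture for ``the relevant classes'' is circular when you do not yet know what $K'$ is); (iii) the appeal to a ``classification'' of epimorphisms between two-bridge knot groups misattributes to Ohtsuki--Riley--Sakuma what is only a construction in their work --- completeness of that list is a much later and much heavier theorem (Agol, written up by Aimi--Lee--Sakai--Sakuma), and using it here would be out of proportion to the problem.

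The paper's proof avoids all peripheral-structure issues. It cites Boileau--Boyer--Reid--Wang \cite{simon} for two facts valid for \emph{arbitrary} epimorphisms from a two-bridge knot group: the target $K'$ must itself be two-bridge (Corollary 1.3), and $\Delta_{K'}\mid\Delta_n$ (Remark (3) of Proposition 1.11). Since $\Delta_n$ is irreducible (your discriminant computation here agrees with the paper's) and two-bridge knots have nontrivial Alexander polynomial, $K'$ has Alexander polynomial exactly $\Delta_n$, hence $H_1(\widetilde M)\cong H_1(\widetilde M')$ for the infinite cyclic covers. The epimorphism then restricts to a surjection $\pi_1(\widetilde M)\to\pi_1(\widetilde M')$, and the paper upgrades this to an isomorphism via Stallings' theorem \cite{stallings}, using Mayland's description \cite{mayland} of these commutator subgroups as unions of parafree groups together with Baumslag \cite{baum} to get residual torsion-free nilpotence. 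This homological endgame replaces your genus/ORS/meridional-rank machinery entirely; if you want to salvage your outline, the essential missing ingredient is precisely \cite{simon}, which controls the target without any meridian-preservation hypothesis.
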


\begin{proof}
The knot $J(2n,2n)$ has Alexander polynomial
\begin{equation*}
\Delta_n(t)=n^2 t^2 + (1-2n^2) t +n^2.
\end{equation*}
Since its quadratic discriminant, $1-4n^2$, is negative, $\Delta_n$ is an irreducible integral polynomial.

Denote the knot $J(2n,2n)$ by $K$ and suppose there exists an epimorphism from $\Gamma_n$ onto the knot group $\Gamma'$ for some other knot $K'$. The Alexander polynomial of $K'$ must divide $\Delta_n$ (see e.g. Remark (3) of \cite[Proposition 1.11]{simon}) and furthermore, $K'$ is necessarily a two-bridge knot \cite[Corollary 1.3]{simon}. However, two-bridge knots have nontrivial Alexander polynomials. Therefore $K'$ must have the same Alexander polynomial $\Delta_n(t)$.

Let $\widetilde{M}$ and $\widetilde{M'}$ denote the infinite cyclic covers of $S^3-J(2n,2n)$ and $S^3-K'$ respectively.
In \cite{mayland}, Mayland expressed the derived groups $\gamma(M)$ and $\gamma(M')$ of $\widetilde{M}$ and $\widetilde{M'}$ for any two-bridge knots as a union of parafree groups in such a way that \cite[Proposition 2.1]{baum} applies to show $\gamma(M)$ and $\gamma(M')$ are residually torsion-free nilpotent. That is, $\gamma(M)_\omega\cong 1\cong\gamma(M')_\omega$ where $G_\omega$ is the $\omega$-term in the lower central series and $\omega$ is the first infinite cardinal.

Since the knots share the same Alexander polynomial, $H_1(\widetilde{M})\cong H_1(\widetilde{M'})$. We can now apply a theorem of Stallings \cite[Theorem 3,4]{stallings} to the epimorphism $h:\pi_1(\widetilde{M}) \rightarrow \pi_1(\widetilde{M'})$ to conclude $h$ is an ismorphism. Therefore, $\Gamma_n$ and $\Gamma'$ are isomorphic.
\end{proof}

Note that \autoref{claim no epi} was also proved in \cite[Proposition 3.1]{minimality}.

Another way in which non-canonical components of characters of irreducible representations can arise in the character variety is when the knot has a certain nice symmetry described by Ohtsuki. In particular, whenever a two-bridge knot has two-bridge normal form $(\alpha,\beta)$ with $\beta^2\equiv 1\mod\alpha$ and $\beta\neq 1$, there is a diagram from which one can see an orientation preserving involution. This involution induces a nontrivial action on the character variety. However, it fixes a neighborhood of the character of a holonomy representation. Therefore, there exists a non-canonical component containing characters of irreducible representations (see \cite[Proposition 5.5]{ohtsuki}).
Notice that the knots studied in this paper satisfy these conditions. They have two-bridge normal form $(4n^2-1,4n^2-2n-1)$.

\subsection{Other Examples of Two-Bridge Knots with Two Components}
In \autoref{table: 2-comp knots} we list 
%all%
knots with crossing number at most 9 whose character variety contains exactly two components of irreducible components. For all of these, the intersection points are Galois conjugates and detect the same slope. The table includes the 2-bridge normal form, the detected slope, whether or not the knot is fibered or has the $(p,q)$-symmetry described in \autoref{subsec: final mult components}, and if there is an epimorphism from the knot group to another knot group. Whenever a knot is fibered, a Seifert surface cannot be detected by ideal nor by algebraic non-integral points in the character variety.

\begin{table}
\begin{tabular}{ |c|c|c|c|c|c| } 
 \hline
 knot & $(p,q)$ & detected slope & fibered & $(p,q)$-symmetry & epimorphism \\
 \hline
	$7_{4}$ & $(15,11)$ & 0 & no & yes & no \\  
	$7_{7}$ & $(21,13)$ & 6 & yes & yes & no\\ 
	$8_{11}$ & $(27,19)$ & 6 & no & no & no \\ 
	$9_{6}$ & $(27,5)$ & 18 & no & no & $3_1$ \\ 
	%$9_{10}$ & $(33,23)$ & -18, -12, -6, 0 ? & no & yes & no \\ 
	$9_{17}$ & $(39,25)$ & 10 & yes & yes & no \\
 \hline
\end{tabular}
\caption{Knots with two components of irreducible representations} \label{table: 2-comp knots}
\end{table}

In addition to these, the knot groups for the knots $10_{5}$, $10_{9}$ and $10_{32}$ are known to have epimorphisms onto the trefoil knot group. Indeed, the two-bridge knots $9_{6}$, $10_{5}$, $10_{9}$ and $10_{32}$ are the only knots up to 10 crossings whose knot groups have epimorphisms to another two-bridge knot (see \cite[Theorem 1.1]{ordering}). The knot groups surject to the trefoil knot group such that the peripheral subgroup is sent to the peripheral subgroup of the trefoil knot group. Since the non-canonical component of the character variety corresponds to the canonical component of the trefoil character variety, the detected slopes correspond to detected slopes of the trefoil knot. As a fibered knot, the only detected slope of the trefoil knot is 6, so the detected slopes for $9_{6}$, $10_{5}$, $10_{9}$ and $10_{32}$ are multiples of 6.

\subsection{Two-Bridge Knots with Three Components}
One may also want to consider two-bridge knots with three distinct components of irreducible representations in the character variety. Two examples of these are the knots $9_{23}$ and $10_{40}$ with two-bridge normal forms $(45,19)$ and $(75,29)$ both of which satisfy the symmetry condition described above and provide epimorphisms to the trefoil group. These two knots each have character varieties with a canonical component, a distinct component corresponding to the symmetry condition, and a distinct component corresponding to the canonical component of the character variety of the trefoil (to see that the knot group has an epimorphism to the trefoil knot group, refer to \cite[Theorem 1.1]{ordering}. All pairwise intersection points between these three components are algebraic non-integral with the trace of the meridian non-integral by a prime over 2, and correspond to irreducible representations. We note that character variety of the knot $10_{40}$ has triple intersection points between these three components.

\bibliographystyle{plain}

\begin{thebibliography}{888}

\bibitem{baum} Baumslag, Gilbert. Groups with the same lower central sequence as a relatively free group. II. Properties. {\it Trans. Amer. Math. Soc.} 142 (1969) 507-538.

\bibitem{simon} Boileau, Michel; Boyer, Steve; Reid, Alan W.; Wang, Shicheng. Simon's conjecture for two-bridge knots. {\it Comm. Anal. Geom.} 18 (2010), no. 1, 121-143. 
 
\bibitem{seminorms} Boyer, Steve; Zhang, Xingru. On Culler-Shalen seminorms and Dehn filling. {\it Ann. of Math.} (2)  148  (1998),  no. 3, 737-801. 

\bibitem{cgls} Culler, Marc; Gordon, Cameron; Luecke, John; Shalen, Peter B. Dehn surgery on knots. {\it Ann. of Math.} (2) 125 (1987), no. 2, 237-300. 

\bibitem{cullershalen} Culler, Marc; Shalen, Peter B.
Varieties of group representations and splittings of 3-manifolds. {\it Ann. of Math.} (2) 117 (1983), no. 1, 109-146. 
 
\bibitem{hatcherthurston} Hatcher, Allen; Thurston, William. Incompressible surfaces in 2-bridge knot complements. {\it Invent. Math.} 79 (1985), no. 2, 225-246.
  
\bibitem{apoly} Hoste, Jim; Shanahan, Patrick D.  A formula for the A-polynomial of twist knots. {\it J. Knot Theory Ramifications} 13 (2004), no. 2, 193-209. 
 
\bibitem{ordering} Kitano, Teruaki; Suzuki, Masaaki. A partial order in the knot table. {\it Exp. Math.} 14 (2005), no. 4, 385–390.

\bibitem{longreid}  Long, Darren D.; Reid, Alan W. Commensurability and the character variety. {\it Math. Res. Lett.}  6  (1999),  no. 5-6, 581-591.

\bibitem{smooth} Macasieb, Melissa L.; Petersen, Kathleen L.; van Luijk, Ronald M. On character varieties of two-bridge knot groups. {\it Proc. Lond. Math. Soc.} (3) 103 (2011), no. 3, 473-507.

\bibitem{mayland} Mayland, Edward J., Jr. Two-bridge knots have residually finite groups. {\it Proceedings of the Second International Conference on the Theory of Groups (Australian Nat. Univ., Canberra, 1973)}, pp.488-493. Lecture Notes in Math., Vol. 372, Springer, Berlin, 1974. 

\bibitem{minimality} Nagasato, Fumikazu; Suzuki, Masaaki; Tran, Anh T. On minimality of two-bridge knots. {\it Internat. J. Math.} 28 (2017), no. 3, 1750020.

\bibitem{stallings}Stallings, John. Homology and central series of groups. {\it J. Algebra} 2 1965 170-181. 
  
\bibitem{ohtsuki} Ohtsuki, Tomotada. Ideal points and incompressible surfaces in two-bridge knot complements. {\it J. Math. Soc. Japan} 46 (1994), no. 1, 51-87.

\bibitem{riley} Riley, Robert. Algebra for Heckoid groups. {\it Trans. Amer. Math. Soc.} 334 (1992), no. 1, 389-409. 

\bibitem{riley2} Riley, Robert. Nonabelian representations of 2-bridge knot groups. {\it Quart. J. Math. Oxford Ser.} (2) 35 (1984), no. 138, 191-208. 
 
\bibitem{riley3} Riley, Robert. Parabolic representations of knot groups. I. {\it Proc. London Math. Soc.} (3)  24 (1972), 217-242.
 
\bibitem{schanuelzhang} Schanuel, Stephen H.; Zhang, Xingru. Detection of essential surfaces in 3-manifolds with $\SL_2$-trees. {\it Math. Ann.} 320 (2001), no. 1, 149-165. 
 
\bibitem{handbook} Shalen, Peter B. Representations of 3-manifold groups. {\it Handbook of geometric topology}, 955-1044, North-Holland, Amsterdam, 2002. 

\bibitem{serre} Serre, Jean-Pierre. {\it Trees.} Springer Monographs in Mathematics. Springer-Verlag, Berlin, 2003.

\end{thebibliography}

\end{document}